\providecommand{\U}[1]{\protect\rule{.1in}{.1in}}
  \newcolumntype{Y}{>{\raggedleft\arraybackslash}X}
\def\bn{{\mathbb{N}}}
\def\br{{\mathbb{R}}}
\def\bz{{\mathbb{Z}}}
\def\br{\mathbb R}
\def\td{{\tilde d}}
\def\tD{{\tilde D}}
\def\vs{\vskip.3cm}
\def\noi{\noindent}
\def\vp{\varphi}
\def\ker{\text{\rm Ker\,}}
\DeclareMathOperator{\gl}{GL}
\DeclareMathOperator{\id}{Id}
\newcommand{\amal}[5]{#1\prescript{#4}{#5}\times_{#3}#2}
\newcommand\cV{\ensuremath{\mathcal V}}
  \definecolor{mygreen}{rgb}{0,.66,.05}
  \definecolor{lightyellow}{rgb}{1,1,.80}
  \theoremstyle{plain}
\newtheorem{theorem}{Theorem}[section]
\newtheorem{lemma}{Lemma}[section]
\newtheorem{corollary}{Corollary}[section]
\theoremstyle{definition}
\newtheorem{remark}{Remark}[section]
\newtheorem{remark-definition}{Remark and Definition}[section]
\newtheorem{rem-not}{Remark and Notation}[section]
\begin{document}

\title{Solutions of Fixed Period in the Nonlinear Wave Equation on Networks}
\author{Carlos Garc\'{\i}a-Azpeitia\thanks{{\small Departamento de Matem\'{a}ticas,
Facultad de Ciencias, Universidad Nacional Aut\'{o}noma de M\'{e}xico, 04510
M\'{e}xico DF, M\'{e}xico, cgazpe@ciencias.unam.mx}}, Wieslaw
Krawcewicz\thanks{{\small Applied Mathematics Center at Guangzhou University, Guangzhou, 510006 China,
 and Department of Mathematical Sciences the University of
Texas at Dallas Richardson, 75080 USA. wieslaw@utallas.edu}} and Yanli
Lv\thanks{{\small Corresponding author, College of Science and Three Gorges Mathematical Research Center, China Three Gorges University, Yichang,
443002, P.R.C., yxl103720@utdallas.edu}}}
\date{}
\maketitle

\begin{abstract}
The wave equation on network is defined by $\partial_{tt}u=\Delta_{G}u+g(u)$,
where $u\in\mathbb{R}^{n}$ and the graph Laplacian $\Delta_{G}$ is an operator
on functions on $n$ vertices. We suppose that $g:\mathbb{R}^{n}\rightarrow
\mathbb{R}^{n}$ is an odd continuous function that satisfies $g(0)=g^{\prime
}(0)=0$ and the Nagumo condition. Assuming that the graph is invariant by a
subgroup of permutations $\Gamma$, using a $\Gamma$-equivariant topological
invariant we prove the existence of multiple non-constant $p$-periodic
solutions characterized by their symmetries.

\textbf{Key Words and Phrases:} periodic solutions, symmetries, equivariant
degree, time reversible systems, second order autonomous equation, fixed-point reduction.

\textbf{AMS Mathematics Subject Classification:}
37J45, 34C25, 37C80,  47H11, 55M25.
\end{abstract}

\section{Introduction}

       The analysis and understanding of network structures is prominent in the
characterization of patterns in molecules, neural networks, and electric
networking. The structure for representing a network is a graph, given by a
collection of nodes connected by edges. The wave equation on a network is
\[
\ddot{u}=\Delta_{G}u+g(u),\qquad u=(u_{1},u_{2},\dots,u_{n})\in\mathbb{R}
^{n},\;g(u)=(g(u_{1}),...,g(u_{n})),
\]
where $-\Delta_{G}$ represents the graph Laplacian. The semidefinite negative
matrix $\Delta_{G}$ has eigenvalues $-\omega_{n-1}^{2}\leq...\leq-\omega
_{0}^{2}=0$. The eigenvector $e_{0}=(1,...,1)\in\mathbb{R}^{n}$ corresponding
to the eigenvalue $\omega_{0}=0$ is known as the Goldstone mode.

The wave equation on networks has symmetries that appear naturally and given
by the subgroup of permutations that leave invariant graph. The wave equation on a network for the nonlinearity $g(x)=-x^{3}$ is
used as model of electric networks according to \cite{caputo} and \cite{aoki};
\cite{aoki} analyses the existence of special nonlinear periodic solutions in
network of cycles with symmetries $D_{n}$, while \cite{caputo} extends this
result to general networks with $\mathbb{Z}_{2}$-symmetries. Related models
with $D_{n}$-symmetries are analyzed in \cite{Ga15a} and \cite{GolSchSt} .

Let $\Gamma$ denote the group of permutations that preserve the symmetries of
the graph. Our purpose is to study a general autonomous second order system of
ODEs
\begin{equation}
\ddot{u}=f(u),\quad u\in V:=\mathbb{R}^{n}, \label{shs1}
\end{equation}
where $V$ is an orthogonal $\Gamma$-representation and $f:V\rightarrow V$ is a
$\Gamma$-equivariant odd $C^{1}$-function satisfying the standard Nagumo
condition:
\begin{equation}
\exists_{M>0}\;\forall_{u\in V}\;\;\;\left\vert u\right\vert >M\Longrightarrow
u\bullet f(u)>0. \label{eq:cond*}
\end{equation}
Assuming that $0$ is an isolated stationary solution, we establish existence
and multiplicity of various periodic solutions with a fixed period $p>0$.

This general result applies to wave equation on a network, where hypothesis (
\ref{eq:cond*}) is satisfied, for example, by the nonlinearities $g(x)=x^{3}$
and $g(x)=-x^{3}+x^{5}$. To illustrate our result we consider examples of $n$
-chains with $\mathbb{Z}_{2}$-symmetry, a $n$-cycle with dihedral symmetry $
D_{n}$ and the graph of a truncated octahedron with octahedral symmetries $
S_{4}$. In these cases, we prove the existence of at least one periodic
solutions for each period $p>2\pi /\omega _{n-1}$ such that $p\neq 2\pi
k/\omega _{j}$ for $j=1,...,n-1$, where $\omega _{n-1}^{2}$ is the biggest
eigenvalue of $-\Delta _{G}$. In addition, we prove the existence of other
solutions with specific spatio-temporal symmetries for each period $p>2\pi
/\omega _{n-1}$.

The existence of periodic solutions with fixed period have been obtained for
asymptotically linear Hamiltonian systems in \cite{AmZe}, \cite{Ch}, \cite{Go}. In \cite{AmZe} was obtained the first result by comparing a
topological index at zero and infinity. In \cite{Ch} the results are
obtained using Morse theory, while in \cite{Go}, by the use of $S^{1}$
-gradient degree. The Nagumo condition (\ref{eq:cond*}) is equivalent to the
condition that the Hamiltonian system is asymptotic to the identity at
infinite. Then the existence of at least one periodic solution with period  $
p>2\pi /\omega _{n-1}$ is similar to these results, although they do
consider the symmetries of $\Gamma $ that allow us to obtain more solutions
with specific spatio-temporal symmetries.

It is important to mention that we prove the existence of periodic solutions
in the large for non-gradient nonlinearities $g$, i.e. our results are
conceptually different from the continuations of normal modes from the trivial
solution $u=0\ $obtained for gradient systems in \cite{aoki}, \cite{caputo}, \cite{Ga15a} and references therein. Our general result is also applicable to
other models of interests such as the equations with Kuramoto coupling given
by
\[
\ddot{u}_{j}=\sum_{k}a_{j,k}\sin(u_{j}-u_{k})+g(u_{j}),
\]
where $A=(a_{j,k})$ represents the adjacent matrix for the graph and $g$ is a
nonlinearity that satisfies $g(0)=g^{\prime}(0)=0$ and $\left\vert
g(u)\right\vert >1$ for $\left\vert u\right\vert >M$.

The subgroup of permutations $\Gamma$ that leaves the graph invariant
determines the equivariance of the wave equations. The equivariant degree
method \cite{DKLP} allows to detect non-constant $p$-periodic solutions with
various types of spatio-temporal symmetries. Actually, system \eqref{shs1}
without spacial symmetries $\Gamma$ still leads to an equivariant problem in
functional spaces when looking for periodic solutions with fixed period.
Indeed, since system \eqref{shs1} is time-reversible, the related functional
operator is $O(2)$-equivariant. The additional assumption that $g$ is odd
implies that these functional operators are also ${\mathbb{Z}}_{2}
$-equivariant, where ${\mathbb{Z}}_{2}=\{1,-1\}$ acts by multiplication.
Therefore, the associated functional equation is symmetric under the action of
the group
\[
G=\Gamma\times{\mathbb{Z}}_{2}\times O(2)\text{.}
\]

By applying a $H$-fixed point reduction with $H$ a subgroup of $G$, we are
able to exclude constant solutions of the map reduced to the fixed point space
of $H$. We use the groups $H=\{(1,1),(-1,-1)\}$ (which gives anti-periodic solutions $x(t)=-x(t+\pi)$)  and $H=\{(1,1),(-1,\kappa)\}$
(which gives odd solutions
$x(t)=-x(t)$),  among other groups. This allows us to obtain results even in the
case that $0$ is not a regular point of system \eqref{shs1} such as in the
wave equation on networks due to the existence of the Goldstone mode. Since
the reduced map admits the symmetry group $W(H)$, we associate with the system
\eqref{shs1} an equivariant invariant that compares the equivariant degrees
near the zero solution with the equivariant degree on a large ball (which is
determined using \textit{apriori} bounds). The $W(H)$-equivariant invariant
associated with the system \eqref{shs1} reflects the complete structure of the
set of non-constant $p$-periodic solutions according to their symmetric
properties (orbit types). G.A.P. programming, see \cite{Pin}, allows to make
symbolic computations of the equivariant invariants for several types of
$\Gamma$-symmetric network.

The approach to finding periodic solutions with fixed period using equivariant
degree has similarities with the approaches in
\cite{FRR,FRRuan,GolRyb,RR,RY2,RY4,RY3,RybSurvey}. The equivariant degree
methods were used to study the existence of solutions for BVPs in second order
ODEs (see \cite{BKLN,BLN}) and in the case of a reversible system of FDEs. For
Newtonian systems with or without symmetries, the equivariant degree methods
were applied in \cite{DKLP,FRRuan,FRR,RR,RY1,RY2,RY4,RY3,RybSurvey} and for
general Hamiltonian systems in \cite{GolRyb,Radzki, RaRy}. We should also
mention other related work, see \cite{survey,GI2,GI5}. We refer to
\cite{survey,AED,G,GR} for more details about various equivariant degrees and
their properties.

In section 2 we set the problem of finding periodic solutions in functional
space.  In section 3, we  present the method of reduction to the $H$-fixed point space and formulate the abstract existence result 
Theorem \ref{thm:exist2}.  In Section 4 we present three examples of networks, for which we provide a symmetric classification of periodic solutions.

\section{Second Order Autonomous Systems}

\label{sec:sns} Assume that $p>0$ is an arbitrary number. Let $\Gamma$ be a
finite group and $V=\mathbb{R}^{n}$ an orthogonal representation of $\Gamma$
($\Gamma$ is acting on $\mathbb{R}^{n}$ by permuting the vector coordinates in
$\mathbb{R}^{n}$). We define the following second order autonomous system:
\begin{equation}
\begin{cases}
\ddot{u}(t)=f(u(t)),\quad t\in\mathbb{R},\;u(t)\in V,\\
u(t)=u(t+p),\;\dot{u}(t)=\dot{u}(t+p),
\end{cases}
\label{eq:system1}
\end{equation}
where $f:V\rightarrow V$ is a $C^{1}$-function satisfying the following assumptions:

\begin{itemize}
\item[(A1)] $f$ is $\Gamma$-equivariant, i.e., $f(\gamma u)=\gamma f(u)$ for
all $\gamma\in\Gamma$ and $u\in V$;

\item[(A2)] $f$ is odd function, i.e., $f(-u)=-f(u)$, for all $u\in V$, i.e.
$f(0)=0$;

\item[(A3)] $\exists_{ M>0}~\forall_{ u\in V}\;\;\left\vert u\right\vert
>M\;\Longrightarrow\; u\bullet f(u)>0$
\end{itemize}

By substituting $x(t)=u(\lambda t)$, the system \eqref{eq:system1} is
transformed to
\begin{equation}
\ddot{x}(t)=\lambda^{2}f(x(t)),\qquad\text{where }\lambda=p/2\pi\text{.}
\label{eq:yy0}
\end{equation}
Notice that $p$-periodic solution $u(t)$ to system \eqref{eq:system1}
corresponds to $2\pi$-periodic solution to system \eqref{eq:yy0}. Therefore,
we reduce the system \eqref{eq:system1} to
\begin{equation}
\begin{cases}
\ddot{x}(t)=\lambda^{2}f(x(t)),\quad t\in\mathbb{R},\;x(t)\in V,\\
x(t)=x(t+2\pi),\;\dot{x}(t)=\dot{x}(t+2\pi),
\end{cases}
\label{eq:system2}
\end{equation}
Clearly, the function $f$ is a $C^{1}$-function and satisfies conditions (A1)--(A3).

We do not assume that $0$ is non-degenerate equilibrium, i.e. the spectrum
$\sigma(A)$ of the matrix $A:=\lambda^{2}Df(0)$ may contain $-k^{2}$ for some
$k=0,1,2,\dots$.

\subsection{Sobolev Spaces of $\mathbf{2\pi}$-Periodic Functions}

\label{sec:H} Let $\mathscr H$ denote the second Sobolev space of $2\pi
$-periodic functions from $\mathbb{R}$ to $V$, i.e.,
\[
\mathscr H:=H_{2\pi}^{2}(\mathbb{R},V)=\left\{  x:\mathbb{R}\rightarrow
V:x(0)=x(2\pi),\;x|_{[0,2\pi]}\in H^{2}([0,2\pi];V)\right\}  ,
\]
equipped with the inner product
\[
\langle x,y\rangle:=\int_{0}^{2\pi}\left(  \ddot{x}(t)\bullet\ddot{y}
(t)+\dot{x}(t)\bullet\dot{y}(t)+x(t)\bullet y(t)\right)  dt,
\]
and the associated norm $\left\Vert x\right\Vert _{\mathscr H}~.$

Let $O(2)$ denote the group of $2\times2$-orthogonal matrices. Notice that
$O(2)=SO(2)\cup SO(2)\kappa$, where
\[
\begin{bmatrix}
\cos\tau & -\sin\tau\\
\sin\tau & \cos\tau
\end{bmatrix}
\in SO(2),\qquad\kappa=
\begin{bmatrix}
1 & 0\\
0 & -1
\end{bmatrix}
.
\]
It is convenient to identify a rotation with $e^{i\tau}\in S^{1}
\subset\mathbb{C}$. Notice that $\kappa e^{i\tau}=e^{-i\tau}\kappa$.

Put $G=\Gamma\times{\mathbb{Z}}_{2}\times O(2)$, then the space $\mathscr H$
is an orthogonal Hilbert representation of $G$. Indeed, for $x\in\mathscr H$,
$\gamma\in\Gamma$ and $e^{i\tau}\in S^{1}$, we can define the group action as
\begin{align*}
(\gamma,\pm1,e^{i\tau})x(t)  &  =\pm\gamma x(t+\tau),\\
(\gamma,\pm1,e^{i\tau}\kappa)x(t)  &  =\pm\gamma x(-t+\tau),
\end{align*}
and $\Gamma$ acting on $V=\mathbb{R}^{n}$ by permuting the vector coordinates.

In a standard way we identify a $2\pi$-periodic function $x:\mathbb{R}
\rightarrow V$ with a function $x:S^{1}\rightarrow V$, so we can write
$H^{2}(S^{1},V)$ instead of $H^{2}(\mathbb{R},V)$. Consider the $O(2)$-isotypical decomposition of $\mathscr H~,$
\begin{equation}
\mathscr H=\overline{\bigoplus_{k=0}^{\infty}\mathbb{V}_{k}},\quad
\mathbb{V}_{k}:=\left\{  u_{k}\cos(kt)+v_{k}\sin(kt):u_{k},v_{k}\in V\right\}
. \label{eq:isoS1}
\end{equation}

Notice that for $k>0$ the $SO(2)$-space $\mathbb{V}_{k}$ can be identified
with the complexification $V^{c}:=V\oplus iV$ of $V$, on which $SO(2)$ acts
by
\[
e^{i\theta}(a+ib)=e^{-ik\theta}\cdot(a+ib),\quad a,\,b\in V,
\]
where `$\cdot$' stands for complex multiplication. Indeed, define the real
isomorphism $\psi_{k}:V^{c}\rightarrow\mathbb{V}_{k}$ by $\psi(a+ib)(t)=\cos
(kt)a+\sin(kt)b$, $a$, $b\in V$. Then
\[
\psi_{k}\big(e^{i\theta}(a+ib)\big)=\cos(k(t+\theta))a+\sin(k(t+\theta
))b=e^{i\theta}\psi_{k}(a+ib).
\]
On the other hand, the operator $\kappa$ acts on the space $\mathbb{V}_{k}$ by
complex conjugation, i.e.
\[
\kappa\psi(a+ib)(t)=\cos(kt)a-\sin(kt)b=\psi(\kappa(a+ib)).
\]

In summary, the $O(2)$-isotypical component $\mathbb{V}_{0}$ represents
constant functions which can be identified with $V$, while $\mathbb{V}_{k}$,
for $k=1,2,\dots$, is equivalent to the complexification $V^{c}=V\oplus iV$
where $SO(2)$ acts by $k$-folded rotations, i.e., $\gamma z:=\gamma^{k}\cdot
z$, $\gamma\in S^{1}\simeq SO(2)$, $z=x+iy=(x,y)\in\mathbb{R}^{2}$ and
`$\cdot$' denotes the complex multiplication, and $\kappa$ acts by complex
conjugation. That means $\mathbb{V}_{k}$ is modeled on the irreducible
$O(2)$-representation $\mathcal{V}_{k}\simeq\mathbb{R}^{2}$ with $k$-folded
action of $SO(2)$. Then \eqref{eq:isoS1} is also ${\mathbb{Z}}_{2}\times
O(2)$-isotypical decomposition of $\mathscr H$, where ${\mathbb{Z}}_{2}$ acts
by multiplication.

\subsection{Setting in Functional Spaces}

Define the operator:
\[
L:\mathscr H\longrightarrow L^{2}(S^{1};\mathbb{V}),\quad Lx:=\ddot{x}-x,
\]
Then the system $\eqref{eq:system2}$ is equivalent to
\begin{equation}
Lx=\lambda^{2}f(x)-x,\;x\in\mathscr H~. \label{yy8}
\end{equation}
Since $L$ is an isomorphism, equation \eqref{yy8} can be reformulated as
\begin{equation}
\mathscr F(x):=x-L^{-1}(\lambda^{2}f(x)-x)=0~. \label{eq:bigF}
\end{equation}

Notice that $x\equiv0$ is a solution to the equation $\mathscr F(x)=0$. Put
\[
\mathscr A:=D\mathscr F(0)=\id-L^{-1}(\lambda^{2}Df(0)x-x):\mathscr
H\longrightarrow\mathscr H.
\]
One can easily check that $\mathscr A$ is a Fredholm operator of index zero.
Therefore, $\mathscr A$ is an isomorphism if and only if $0\notin
\sigma(\mathscr A)$. Since $\mathscr A$ is $O(2)$-equivariant, it preserves
the isotypical decomposition \eqref{eq:isoS1}. The operator $L$ on the
isotypical components $\mathbb{V}_{k}$ is given by,
\[
L(\cos(kt)u_{k}+\sin(kt)v_{k})=-(k^{2}+1)(\cos(kt)u_{k}+\sin(kt)v_{k}),
\]
which implies
\[
-L^{-1}(u_{k}\cos(kt)+v_{k}\sin(kt))=\frac{1}{k^{2}+1}(u_{k}\cos(kt)+v_{k}
\sin(kt)).
\]
Therefore, in $\mathbb{V}_{k}$ we have
\[
\mathscr A|_{\mathbb{V}_{k}}=\text{Id}-\frac{1}{1+k^{2}}(\lambda
^{2}Df(0)-\text{Id}),
\]
and we obtain the following description of the spectrum of $\mathscr A$
\begin{equation}
\sigma(\mathscr A)=\left\{  \frac{k^{2}+\lambda^{2}\mu}{1+k^{2}}:\mu\in
\sigma(Df(0)),\quad k\in\mathbb{N}\right\}  . \label{eq:spA}
\end{equation}

\subsection{Apriori Bounds}

Consider the following homotopy of the operator $\mathscr F$ (cf.
\eqref{yy8})
\begin{equation}
\mathscr F_{\tau}(x):=x-\tau L^{-1}(\lambda^{2}f(x)-x),\quad\tau\in
\lbrack0,1].\;
\end{equation}
Notice that $\mathscr F:=\mathscr F_{1}$ and $\mathscr F_{\tau}$ is a
$G$-equivariant completely continuous field. Then we have

\begin{lemma}
\label{lem.1} Assume that $f:V\rightarrow V$ is a continuous function
satisfying conditions (A1)--(A3) and (A4). If $x(t)$ is a $2\pi$-periodic
function of class $C^{2}$ such that $\max_{t\in\mathbb{R}}\Vert x(t)\Vert>M$,
then $x(t)$ is not a solution of $\mathscr F_{\tau}(x)=0$ for $\tau\in
\lbrack0,1]$.
\end{lemma}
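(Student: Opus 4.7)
The plan is to derive a contradiction at a maximum point of $\|x(t)\|^2$, using the Nagumo condition (A3). The only subtlety lies in the degenerate case $\tau = 0$, which has to be treated separately since the Nagumo term vanishes.

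First I would rewrite $\mathscr F_\tau(x)=0$ in ODE form. Since $L$ is an isomorphism, the equation is equivalent to $Lx = \tau(\lambda^2 f(x) - x)$, i.e.
\[
\ddot x(t) \;=\; \tau \lambda^2 f(x(t)) + (1-\tau)\, x(t), \qquad t\in\mathbb{R}.
\]
For $\tau=0$ this reduces to $\ddot x = x$, whose only $2\pi$-periodic solution is $x\equiv 0$; the hypothesis $\max_t\|x(t)\|>M$ then fails trivially, so this case is disposed of.

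For $\tau\in(0,1]$, introduce the smooth $2\pi$-periodic auxiliary function $\varphi(t):=\tfrac{1}{2}\|x(t)\|^2$. By periodicity and continuity it attains its global maximum at some $t^*\in\mathbb{R}$, at which $\dot\varphi(t^*)=0$ and $\ddot\varphi(t^*)\leq 0$. A direct computation combined with the ODE above yields
\[
\ddot\varphi(t^*) \;=\; \|\dot x(t^*)\|^2 + x(t^*)\bullet\ddot x(t^*) \;=\; \|\dot x(t^*)\|^2 + \tau\lambda^2\, x(t^*)\bullet f(x(t^*)) + (1-\tau)\|x(t^*)\|^2.
\]
Since $\|x(t^*)\| = \max_t \|x(t)\| > M$, condition (A3) gives $x(t^*)\bullet f(x(t^*)) > 0$. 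Combined with $\tau>0$, $\lambda = p/2\pi>0$ and $(1-\tau)\|x(t^*)\|^2\geq 0$, this forces $\ddot\varphi(t^*) > 0$, contradicting $\ddot\varphi(t^*)\leq 0$.

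The argument for $\tau\in(0,1]$ is thus a one-line application of the classical second-derivative test at the maximum of $\|x\|^2$; I do not anticipate any real obstacle beyond isolating the $\tau=0$ case and verifying that the three contributions to $\ddot\varphi(t^*)$ are all nonnegative while the middle one is strictly positive.
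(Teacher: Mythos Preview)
Your proof is correct and follows essentially the same second-derivative-test argument as the paper's own proof. The only minor difference is that you single out the case $\tau=0$, whereas the paper handles it uniformly by observing that $(1-\tau)\lvert x(t_0)\rvert^2+\tau\lambda^2 f(x(t_0))\bullet x(t_0)>0$ remains strictly positive even at $\tau=0$ (since then the first summand equals $\lvert x(t_0)\rvert^2>M^2>0$); your separate treatment is thus unnecessary but not incorrect.
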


\begin{proof}
Assume the contradiction that $x(t)$ is a solution of $\mathscr F_{\tau}(x)=0$
when $\max_{t\in\mathbb{R}}\Vert x(t)\Vert>M$. Consider the function
$\phi(t):=\frac{1}{2}\Vert x(t)\Vert^{2}$. Suppose that $\phi(t_{0}
)=\max_{t\in\mathbb{R}}\phi(t)$, then $\phi^{\prime}(t_{0})=x(t_{0}
)\bullet\dot{x}(t_{0})=0$ and $\phi^{\prime\prime}(t_{0})=\dot{x}
(t_{0})\bullet\dot{x}(t_{0})+\ddot{x}(t_{0})\bullet x(t_{0})\leq0$. However,
by condition (A3),
\[
\phi^{\prime\prime}(t_{0})=\dot{x}(t_{0})\bullet\dot{x}(t_{0})+\ddot{x}
(t_{0})\bullet x(t_{0})>(1-\tau)\left\vert x(t_{0})\right\vert ^{2}
+\tau\lambda^{2}f(x(t_{0}))\bullet x(t_{0})>0,
\]
which is a contradiction.
\end{proof}

\begin{lemma}
\label{lem1} There exists $R>0$ such that every solution $x(t)$ of $\mathscr
F_{\tau}(x)=0$ for $\tau\in\lbrack0,1]$ satisfies $\left\Vert x\right\Vert
_{\mathscr H}<R$.
\end{lemma}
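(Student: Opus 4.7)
The plan is to bootstrap from the pointwise bound already provided by Lemma \ref{lem.1} up to a bound in the $\mathscr H$-norm, using the equation itself to control $\ddot x$ and integration by parts to control $\dot x$. The key observation is that $\|x\|_{\mathscr H}^2 = \|\ddot x\|_{L^2}^2 + \|\dot x\|_{L^2}^2 + \|x\|_{L^2}^2$, so it suffices to bound each of the three terms uniformly in $\tau \in [0,1]$.

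First I would invoke Lemma \ref{lem.1}, which implies that any solution $x \in \mathscr H$ of $\mathscr F_\tau(x) = 0$ satisfies $\|x(t)\| \le M$ for all $t \in \mathbb R$. This immediately yields $\|x\|_{L^2}^2 \le 2\pi M^2$. Next, rewriting $\mathscr F_\tau(x) = 0$ in the form $Lx = \tau(\lambda^2 f(x) - x)$, i.e.\
\[
\ddot x(t) = (1-\tau)\, x(t) + \tau \lambda^2 f(x(t)),
\]
and setting $K := \sup\{\|f(u)\| : \|u\| \le M\}$ (finite by continuity of $f$), one obtains the uniform pointwise bound $\|\ddot x(t)\| \le M + \lambda^2 K$, and hence $\|\ddot x\|_{L^2}^2 \le 2\pi (M+\lambda^2 K)^2$.

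For the first derivative, I would exploit the $2\pi$-periodicity of $x$ via integration by parts:
\[
\|\dot x\|_{L^2}^2 \;=\; \int_0^{2\pi} \dot x(t) \bullet \dot x(t)\, dt \;=\; -\int_0^{2\pi} x(t) \bullet \ddot x(t)\, dt \;\le\; \|x\|_{L^2}\,\|\ddot x\|_{L^2},
\]
where the boundary terms vanish by periodicity. Substituting the bounds above gives $\|\dot x\|_{L^2}^2 \le 2\pi M (M+\lambda^2 K)$. Adding the three estimates produces a constant $R > 0$, depending only on $M$, $\lambda$, and $f|_{\overline{B_M}}$, such that $\|x\|_{\mathscr H} < R$ for every solution of $\mathscr F_\tau(x) = 0$ and every $\tau \in [0,1]$.

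I do not anticipate a serious obstacle: the Nagumo condition (A3) has done the essential work through Lemma \ref{lem.1}, giving the $L^\infty$ bound, and from there the argument is the standard elliptic-type bootstrap. The only minor care needed is to ensure that the constants obtained are genuinely independent of $\tau$, which is clear from the explicit form of $\mathscr F_\tau$.
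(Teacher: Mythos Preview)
Your proof is correct and in fact more careful than the paper's. The paper's argument invokes Lemma~\ref{lem.1} to get the uniform $C^0$ bound $|x(t)|<M$, and then writes, by ``Sobolev embedding $\mathscr H\subset C^0_{2\pi}$'', that $\|x\|_{\mathscr H}\le C\|x\|_{C^0_{2\pi}}\le CM$. As stated this inequality points the wrong way: the embedding $H^2\hookrightarrow C^0$ gives $\|x\|_{C^0}\le C\|x\|_{H^2}$, not the reverse. What is presumably intended (and what actually works) is exactly your bootstrap: once $x$ is bounded in $C^0$, the equation $\ddot x=(1-\tau)x+\tau\lambda^2 f(x)$ bounds $\ddot x$ in $C^0$ (hence in $L^2$), and integration by parts handles $\dot x$. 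Equivalently one could observe that $x=\tau L^{-1}(\lambda^2 f(x)-x)$ with $L^{-1}:L^2\to H^2$ bounded and $\lambda^2 f(x)-x$ bounded in $L^2$ by the $C^0$ estimate. Either way, your explicit three-term estimate is the honest version of the argument, and your care in checking that the constants are uniform in $\tau\in[0,1]$ is exactly what is needed for the homotopy in the sequel.
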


\begin{proof}
By lemma $\ref{lem.1}$, there exists a $M>0$ such that any $2\pi$-periodic
solution $x(t)$ to $\mathscr F_{\tau}(x)=0$ satisfies $\left\vert
x(t)\right\vert <M$. By Sobolev embedding $\mathscr H=H_{2\pi}^{2}
(\mathbb{R},V)$ is contained in $C_{2\pi}^{0}(\mathbb{R},V)$. Then there is a
constant $C>0$ such that
\[
\left\Vert x\right\Vert _{\mathscr H}\leq C\left\Vert x\right\Vert _{C_{2\pi
}^{0}}\leq MC.
\]
Take $R=CM$.
\end{proof}

By Lemma $\ref{lem1}$, there exists a constant $R>0$ such that any solution to
$\mathscr F_{\tau}=0$ must belong to the set $\Omega_{R}:=\left\{
x\in\mathscr H:\Vert x\Vert<R\right\}  $. In particular, $\left\{  \mathscr
F_{\tau}\right\}  _{\tau\in\lbrack0,1]}$ is an $\Omega_{R}$-admissible
homotopy between $\mathscr F_{1}=\mathscr F$ and $\mathscr F_{0}=\id$.

\section{Reduction to Fixed-Point Subspace of $H$}

We consider the system \eqref{eq:system2} where $f:V\rightarrow\mathbb{R}$ is
differentiable at $0$ satisfying the assumptions (A1)-(A3). We do not exclude
the case that $0\in\sigma(\mathscr A)$ (which means that zero is
\textit{degenerate} solution to \eqref{eq:system2}). Notice that
$\text{\textrm{Ker\thinspace}}(\mathscr A)$ is finite dimensional because
$\mathscr A:\mathscr H\rightarrow\mathscr H$ is a Fredholm operator.

Since for any closed subgroup $H\subset G$, the nonlinear map $\mathscr
F^{H}:=\mathscr F|_{\mathscr H^{H}}:\mathscr H^{H}\rightarrow\mathscr H^{H}$
and $\mathscr A^{H}:=\mathscr A|_{\mathscr H^{H}}:\mathscr H^{H}
\rightarrow\mathscr H^{H}$ are $W(H)$-equivariant, it is a common practice to
look for the $W(H)$-orbits of solutions for \eqref{eq:system2} in the subspace
$\mathscr H^{H}$. In other words, if for some $x\in\mathscr H^{H}$ we have
$\mathscr F^{H}(x)=0$, then $\mathscr F(x)=0$, i.e. $x$ is a solution to
\eqref{eq:system2}. Let us find groups $H\leq\mathbb{Z}_{2}\times O(2)$ such
that
\[
\text{\textrm{Ker \thinspace}}(A)\cap\mathscr H^{H}=\{0\}.
\]

Let $m$ be a positive integer. Notice that $-1\in\mathbb{Z}_{2}$ acts always
as $-\id$ on the representations $\mathbb{V}_{m}$. Since $e^{i\pi/m}\in O(2)$
acts as $-\id$ on the representations $\mathbb{V}_{m}$, then the element
$(-1,e^{i\pi/m})\in\mathbb{Z}_{2}\times O(2)$ acts trivially on each
representation $\mathbb{V}_{m}$, i.e. each isotropy group of the
representation $\mathbb{V}_{m}$ contains the group generated by $(-1,e^{i\pi
/m})$:
\begin{equation}
H={\mathbb{Z}}_{2m}^{d}:=\left\{  e\right\}  \times\{(1,1),(-1,\gamma
),(1,\gamma^{2}),\dots,(-1,\gamma^{2m-1})\},\qquad\gamma=e^{i\pi/m}\text{.}
\label{H}
\end{equation}

The action of the generator $(-1,\gamma)$ on the function $\cos(kt)u_{k}
+\sin(kt)v_{k}\in\mathscr H^{{\mathbb{Z}}_{2m}^{d}}$ is given by
\[
(-1,\gamma)(\cos(kt)u_{k}+\sin(kt)v_{k})=-\left(  \cos\left(  kt+\frac{k\pi
}{m}\right)  u_{k}+\sin\left(  kt+\frac{k\pi}{m}\right)  v_{k}\right)  .
\]
Therefore, these functions are in the fixed point space $\mathscr H^{H}$ if
$k$ is an odd multiple of $m$, and
\[
\mathscr H^{H}=\overline{\bigoplus_{k\in m(2\mathbb{N}-1)}\mathbb{V}_{k}}
\]

Since the fix point space of the group $H$ do not contain constant functions,
then it is enough to consider the fix point space,
\[
\mathscr H^{H}=\{x\in\mathscr H:x_{j}(t)=-x_{j}(t+\pi/m)=x_{j}(t+2\pi/m)\},
\]
to exclude the zero eigenvalues of $\mathscr A$. Notice that $H$ is normal,
thus $N(H)=\Gamma\times{\mathbb{Z}}_{2}\times O(2)$ and
\[
W(H)=\Gamma\times{\mathbb{Z}}_{2}\times O(2).
\]
Therefore, we have that $\mathscr H^{H}$ is a Hilbert representation of
$W(H)=G$ and $\mathscr F^{H}$ is $G$-equivariant. Then we can look for
non-constant periodic solutions of the equation $\mathscr F^{H}(x)=0$.

Since each of the subspaces $\mathbb{V}_{k}^{H}$ is $\Gamma$-invariant, we
obtain the $G$-invariant decomposition of the space $\mathscr H^{H}$. Then for
all considered above twisted subgroups $H$ the component $\mathbb{V}_{k}^{H}$
can be refined into the direct product of $G$-isotypical components
\begin{equation}
\mathbb{V}_{k}^{H}=V_{1,k}^{H}\oplus V_{2,k}^{H}\oplus\dots\oplus V_{r,k}^{H},
\label{eq:iso-red}
\end{equation}
where $V_{j,k}^{H}$ is modeled on the irreducible $\Gamma$-representation
$\mathcal{V}_{j,k}$. \vskip.3cm

Put $\Omega_{\varepsilon}=\{x\in\mathscr H:\Vert x\Vert<\varepsilon\}$. Since
$0\notin\sigma(\mathscr A^{H})$, one can easily show  that for
sufficiently small $\varepsilon>0$, the map $\mathscr F^{H}$ is $\Omega
_{\varepsilon}^{H}$-admissibly $G$-equivariantly homotopic to $\mathscr A^{H}
$. On the other hand, by Lemma \ref{lem1}, the map is also $\Omega_{R}^{H}
$-admissibly $G$-equivariantly homotopic to $\id$. Using the notation
introduced in Appendix the equivariant topological invariant
\begin{equation}
\omega^{H}=G\mbox{\rm -}\deg(\mathscr F^{H},\Omega^{H}), \label{eq:inv-H}
\end{equation}
where $\Omega:=\Omega_{R}\setminus\Omega_{\varepsilon}$ is well-defined and is
given by
\[
\omega^{H}=G\mbox{\rm -}\deg(\mathscr F^{H},\Omega_{R}^{H})-G\mbox{\rm -}\deg
(\mathscr F^{H},\Omega_{\varepsilon}^{H})=(G)-G\mbox{\rm -}\deg(\mathscr
A,B(\mathscr H))\text{.}
\]

On the other hand, we have
\begin{equation}
G\mbox{\rm -}\deg\left(  \mathscr A,B(\mathscr H)\right)  =\prod_{k}
\prod_{j=1}^{r}\prod_{\mu\in\sigma(A)}\left(  \deg_{\mathcal{V}_{j,k}}\right)
^{\mathfrak{m}_{-}^{j,k}(\mu)}, \label{eq:prod-2}
\end{equation}
where $\mathfrak{m}_{-}^{j,k}(\mu)$ stands for the $\mathcal{V}_{j,k}
$-isotypical multiplicity of the eigenvalue
\[
\xi_{j,k}=\frac{k^{2}+\lambda^{2}\mu_{j}}{k^{2}+1}.
\]
with $\mu_{j}\in\sigma(Df(0))$\vskip.3cm

Consequently we can formulate the following theorem for the resonance case. \vskip.3cm

\begin{theorem}
\label{thm:exist2} Let $f:V\rightarrow V$ satisfies the conditions (A1), (A2),
(A3). Set $H={\mathbb{Z}}_{2m}^{d}$ and assume that
\[
\lambda^{2}\neq-k^{2}/\mu,\qquad k\in m(2\mathbb{N}+1),\mu\in\sigma(Df(0)).
\]
If the equivariant invariant $\omega^{H}$ is not equal to zero, then the
system \eqref{eq:system2} admits a non-constant $2\pi$-periodic solution. More
precisely, if
\[
\omega^{H}=n_{1}(H_{1})+n_{2}(H_{2})+\dots+n_{k}(H_{k}),\quad n_{j}
\not =0,\;\;j=1,2,\dots,k,
\]
then for every $(H_{j})$, there exists a non-constant $2\pi$-periodic solution
$x(t)$ to \eqref{eq:system2} such that $G_{x}\geq H_{j}$.
\end{theorem}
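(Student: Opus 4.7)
The plan is to invoke the existence property of the equivariant degree, with the theorem essentially encoded in the invariant $\omega^{H}$ constructed just before its statement. First, I would verify that under the hypothesis $\lambda^{2}\neq -k^{2}/\mu$ for $k\in m(2\mathbb{N}+1)$ and $\mu\in\sigma(Df(0))$, the restricted linearization $\mathscr A^{H}$ is an isomorphism on $\mathscr H^{H}$. This follows from the spectrum formula \eqref{eq:spA}: on the fixed-point space $\mathscr H^{H}=\overline{\bigoplus_{k\in m(2\mathbb{N}-1)}\mathbb{V}_{k}}$ the only modes that appear are those with $k\in m(2\mathbb{N}+1)$, and on such a mode the eigenvalue $(k^{2}+\lambda^{2}\mu)/(k^{2}+1)$ vanishes exactly when $\lambda^{2}=-k^{2}/\mu$, which is excluded by assumption. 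Hence $0\notin\sigma(\mathscr A^{H})$.

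Next, I would exploit this non-degeneracy to produce an $\varepsilon>0$ so that $\mathscr F^{H}$ has no zero in $\overline{\Omega_{\varepsilon}^{H}}\setminus\{0\}$ and is $\Omega_{\varepsilon}^{H}$-admissibly $G$-homotopic to the isomorphism $\mathscr A^{H}$ via the straight-line homotopy (using $0\notin\sigma(\mathscr A^{H})$ together with the $C^{1}$-smoothness of $f$ at $0$). Combining with the apriori bound of Lemma \ref{lem1}, $\mathscr F^{H}$ is $\Omega_{R}^{H}$-admissibly $G$-homotopic to $\id$. Therefore the $G$-equivariant degree on the annular region $\Omega^{H}=\Omega_{R}^{H}\setminus\overline{\Omega_{\varepsilon}^{H}}$ is well-defined, and by additivity
\[
\omega^{H}=G\mbox{\rm -}\deg(\mathscr F^{H},\Omega_{R}^{H})-G\mbox{\rm -}\deg(\mathscr F^{H},\Omega_{\varepsilon}^{H})=(G)-G\mbox{\rm -}\deg(\mathscr A,B(\mathscr H)),
\]
where the last term is computed via the product formula \eqref{eq:prod-2}.

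Now I would apply the existence property of the $G$-equivariant degree: if $\omega^{H}=\sum_{j=1}^{k}n_{j}(H_{j})$ with some $n_{j}\neq 0$, then for each such $j$ the region $\Omega^{H}$ contains a zero $x\in\mathscr H^{H}$ of $\mathscr F^{H}$ whose isotropy group $G_{x}$ (computed in $G$) satisfies $G_{x}\geq H_{j}$. Because $H={\mathbb{Z}}_{2m}^{d}$ has no fixed constants in $\mathscr H$ (the generator $(-1,e^{i\pi/m})$ acts nontrivially on the constant subspace $\mathbb{V}_{0}$), every element of $\mathscr H^{H}$ is automatically non-constant; and since $x\in\Omega^{H}$ means $\|x\|_{\mathscr H}>\varepsilon$, it is non-trivial as well. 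Such $x$ yields a non-constant $2\pi$-periodic solution of \eqref{eq:system2} with the prescribed symmetries.

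The main obstacle I anticipate is less conceptual than bookkeeping: ensuring the $G$-admissibility of the straight-line homotopy from $\mathscr F^{H}$ to $\mathscr A^{H}$ on $\Omega_{\varepsilon}^{H}$, which ultimately rests on the standard argument that for a Fredholm field whose linearization at $0$ is invertible, $0$ is an isolated zero of $\mathscr F^{H}$ in $\mathscr H^{H}$; and then combining the two homotopies to guarantee the additivity identity for $\omega^{H}$. Once this is in place, the existence and symmetry conclusions follow directly from the general properties of the equivariant degree recalled in the appendix.
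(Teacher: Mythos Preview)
Your proposal is correct and follows essentially the same approach as the paper: the paper's own proof consists of the single sentence ``This result is a direct consequence of the existence property (G1) for the equivariant $G$-degree,'' relying on the setup (well-definedness of $\omega^{H}$, the homotopies to $\mathscr A^{H}$ and to $\id$, and the exclusion of constants from $\mathscr H^{H}$) already established in the text preceding the theorem. You have simply spelled out those preparatory steps explicitly before invoking (G1), which is entirely in line with the paper's argument.
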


\begin{proof}
This result is a direct consequence of the existence property (G1) for the
equivariant $G$-degree.
\end{proof}

\begin{remark}
We may consider other fixed point groups $H$ in this theorem. The subgroup
$D_{m}^{z}\subset\Gamma\times{\mathbb{Z}}_{2}\times O(2)$ (see \cite{AED} for
more details related to the conventions used in this paper), where
\begin{equation}
D_{m}^{z}:=\{e\}\times\{(1,1),(1,\gamma),\dots,(1,\gamma^{m-1}),(-1,\kappa
),(-1,\gamma\kappa),\dots,(-1,\gamma^{m-1}\kappa)\},
\end{equation}
where $\gamma=e^{i\frac{2\pi}{m}}$. Clearly, a function $x\in\mathscr H$
belongs to $\mathscr H^{D_{m}^{z}}$ if and only if $x$ is a $2\pi/m$-periodic
odd function. Then
\[
\mathbb{V}_{k}^{D_{m}^{z}}:=
\begin{cases}
\left\{  0\right\}  & \mbox{if}\;k\;\text{is not a multiple of}\;m,\\
\left\{  \sin(kt)u:u\in V\right\}  & \mbox{if}\;k\;\text{is a multiple of}\;m.
\end{cases}
\]
Notice that $N(D_{m}^{z})=\Gamma\times{\mathbb{Z}}_{2}\times D_{2m}$, then
$W(D_{m}^{z})=\Gamma\times V_{4}$, where $V_{4}:={\mathbb{Z}}_{2}
\times{\mathbb{Z}}_{2}$ is the Klein's group.

The subgroup $D_{2m}^{d}\subset\Gamma\times{\mathbb{Z}}_{2}\times O(2)$,
where
\begin{equation}
D_{2m}^{d}:=\{e\}\times\{(1,1),(-1,\gamma),\dots,(-1,\gamma^{2m-1}
),(-1,\kappa),(1,\gamma\kappa),\dots,(1,\gamma^{2m-1}\kappa)\}.
\end{equation}
Then, clearly
\[
\mathbb{V}_{k}^{D_{2m}^{d}}:=
\begin{cases}
\left\{  \sin(kt)u:u\in V\right\}  & \mbox{if}\;k=m(2r+1),\text{for some
}\;r=0,1,2,\dots\\
\left\{  0\right\}  & \text{otherwise}.
\end{cases}
\]
Notice that $N(D_{2m}^{d})=\Gamma\times{\mathbb{Z}}_{2}\times D_{2m}$, thus
$W(D_{2m}^{d})=\Gamma\times{\mathbb{Z}}_{2}$.
\end{remark}

\section{Nonlinear wave equations~on networks}

In this section we compute the invariant $\omega^{H}$ and apply the main
theorem to the nonlinear wave equation on networks, given by the function
\[
f(u)=\Delta_{G}u+g(u),\quad u=(u_{1},u_{2},\dots,u_{n})\in\mathbb{R}^{n}.
\]
For simplicity, we assume that $g^{\prime}(0)=0$ and $\left\vert
g(u)\right\vert >\left\vert u\right\vert $ for $\left\vert u\right\vert >M$.
These conditions are satisfied, for example, by the nonlinearities $g(x)=x^{3}$
and $g(x)=-x^{3}+x^{5}$.\ By assumptions $f$ satisfies the Nagumo condition
and
\[
Df(0)=\Delta_{G}u\text{.}
\]

Let $S_{n}$ be the group of permutations of the elements $\{1,...,n\}$. The
equivariance of the operator $f(u)$ is satisfied by the subgroup $\Gamma$ of
$S_{n}$ consisting of those elements $\Gamma<S_{n}$, with action of $\gamma\in
S_{n}$
\[
\gamma u=(u_{\gamma(1)},...,u_{\gamma(n)}), \quad u\in\mathbb{R}^{n},
\]
for which $f(\gamma u)=\gamma f(u)$. For the nonlinearity of the form
$g(u)=(g(u_{1}),...,g(u_{n}))$ the subgroup $\Gamma$ can be determined by
finding the elements $\gamma\in S_{n}$ that commutes with $\Delta_{G}$, i.e.
\begin{equation}
\Delta_{G}\gamma=\gamma\Delta_{G}\text{.}
\end{equation}

The $\Gamma\times{\mathbb{Z}}_{2}$-isotypical decomposition of $V$ is
\[
V=V_{0}^{-}\oplus V_{1}^{-}\oplus\dots\oplus V_{q}^{-},
\]
where each of the subspaces $V_{j}^{-}$ is equivalent to $\mathcal{V}_{j}^{-}$
($\mathcal{V}_{j}^{-}$ is $j$-th irreducible $\Gamma$-representation with
antipodal ${\mathbb{Z}}_{2}$-action). Each eigenvalue $\mu_{j}$ of
$Df(0)=\triangle_{G}$ is assigned to an irreducible representation
$\mathcal{V}_{n_{j}}^{-}$ for $j=1,...,r$. Therefore, we have the following
$G$-invariant decomposition
\[
\mathscr H^{H}=\overline{\bigoplus_{k\in2\mathbb{N}-1,j=0,...,r}
\mathcal{V}_{n_{j},k}}~,
\]
where $\mathcal{V}_{n_{j},k}$ is equivalent to the $k$-fold irreducible
representation $\mathbb{C}\otimes\mathcal{V}_{n_{j}}$ where $\triangle_{G}$
has eigenvalue $\mu_{j}$.

We order the eigenvalues $\mu_{j}$ for $j=0,..,r$ of $\Delta_{G}$ by
\[
\mu_{r}\leq...\leq\mu_{0}=0~.
\]
Since $\mu_{0}=0$, based on our earlier discussion, it is enough to consider
(for the fixed-point reduction) the group $H=\mathbb{Z}_{2}^{-}$ (here
$\bz_{2}^{-}=\bz_{2}^{d}$). For $j=1,$ we have that $\xi_{1,k}>0$ for all $k$.
For $j\neq0$, the eigenvalues $\xi_{j,k}$ changes sign from positive to
negative when $\lambda$ crosses $k/\omega_{j}$. We define an ordering of the
values $k/\omega_{j}$ for $k\in2\mathbb{N}-1$ and $j=1,...,r$ as
\[
1/\omega_{r}=k_{1}/\omega_{j_{1}}\leq k_{2}/\omega_{j_{2}}\leq...\leq
k_{l}/\omega_{j_{l}}~.
\]

Therefore, for any interval $k_{l}/\omega_{j_{l}}<\lambda<k_{l+1}
/\omega_{j_{l+1}}$ we conclude that
\begin{equation}\label{eq:basic-product}
G\text{-deg}(\mathscr A,B(\mathscr H^{H}))=\prod_{k/\omega_{j}<\lambda}
\deg_{\mathcal{V}_{n_{j},k}^{-}}.
\end{equation}
\vs
\begin{remark}\rm Consider an element $a\in A(G)$  given as a (finite) sum
\[
a=\sum_{(H)\in \Phi_0(G)} n_H \, (H)
\]
Then, for every $(H)\in \Phi(G)$  we can define the integer $\langle a,(H)\rangle :=n_H$. We also put 
$\Phi(a):=\{(H): \langle a, (H)\rangle \not=0\}$. Notice that, if $a$ is an invertible element in $A(G)$ then $(G)\in \Phi(a)$. Indeed, since 
$a\cdot a^{-1}=(G)$, by definition of the multiplication in the Burnside ring we have that $\langle (H)\cdot (K),(G)\rangle=(G)$ if and only if $H=K=G$.
Therefore, any invertible element $a\in A(G)$ must be of type $a=\pm (G)+x$, where $(G)\not\in \Phi(x)$. One can easily construct examples of non-invertible elements of type $(G)+x$ in any Burnside ring $A(G)$ (for a non-trivial group $G$).
Notice that for any  orthogonal  $G$-representation $V$ the element $a:=G\text{-deg}(-\id,B(V))$ is invertible (since by the multiplicativity property $a^2=(G)$), therefore all basic $G$-degrees $\deg_{\mathcal{V}_i}$  are invertible in $A(G)$. Finally, let us point out that the Burnside ring $A(G)$ is not an integral domain in general.
\end{remark}
\vs
Clearly, by using the property that $\left(\deg_{\mathcal{V}_{n_{j},k}^{-}}\right)^2=(G)$ in $A(G)$, formula
\eqref{eq:basic-product} can be reduced in such a way that
\begin{equation*}
\prod_{k/\omega_{j}<\lambda}\deg_{\mathcal{V}_{j,k}^{-}}=\prod_{i,l}
\deg_{\mathcal{V}_{i,l}^-},
\end{equation*}
where $\deg_{\mathcal{V}_{i,l}^-}\not=\deg_{\mathcal{V}_{i',l^{\prime}}^-}$ for
$(i,l)\not =(i',l^{\prime})$. In other words, every basic $G$-degree appears only once
in this product. We should also point out that two basic degrees for two
different $G$-representations can be the same (see \cite{AED} for examples). 

\begin{lemma} 
\label{lem:max} Suppose $a$, $b\in A(G)$ and $(H)\in \Phi_0(G)$ be such that 
\begin{itemize}
\item[(i)] $a $ is invertible;
\item[(ii)] $(H)$ is maximal in $\Phi(a)\cup\Phi(b)\setminus \{(G)\}$;
\item[(iii)] $(H)\in \Phi(b)$, $(H)\notin \Phi(a)$;
\end{itemize}
Then $(H)\in \Phi(ab)$.
\end{lemma}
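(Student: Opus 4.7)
The plan is to compute $\langle ab,(H)\rangle$ directly from the expansion of the product in $A(G)$ and show that exactly one term survives and is nonzero. Writing $a=\sum_{(K)}a_K(K)$ and $b=\sum_{(L)}b_L(L)$ with $a_K=\langle a,(K)\rangle$, $b_L=\langle b,(L)\rangle$, bilinearity of the multiplication yields
\[
\langle ab,(H)\rangle \;=\; \sum_{(K),(L)} a_K\,b_L\,\langle (K)\cdot (L),(H)\rangle .
\]

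The key standard fact I would invoke is the description of Burnside-ring multiplication: $(K)\cdot(L)$ represents the $G$-set $G/K\times G/L$, whose point stabilizers have the form $gKg^{-1}\cap hLh^{-1}$. Consequently every orbit type $(M)$ that appears in $(K)\cdot(L)$ satisfies $(M)\leq (K)$ and $(M)\leq (L)$. In particular, $\langle (K)\cdot(L),(H)\rangle\neq 0$ forces both $(H)\leq (K)$ and $(H)\leq (L)$.

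Now hypotheses (ii) and (iii) collapse the double sum. Suppose $(K)\in\Phi(a)$ contributes, so $(H)\leq (K)$. Either $(K)=(H)$, which is excluded by (iii) since $(H)\notin\Phi(a)$, or $(K)>(H)$. In the latter case, if $(K)\neq(G)$ then $(K)\in\Phi(a)\setminus\{(G)\}\subseteq\Phi(a)\cup\Phi(b)\setminus\{(G)\}$, contradicting the maximality of $(H)$ in (ii). Hence $(K)=(G)$. Because $(G)\cdot(L)=(L)$ in $A(G)$, one has $\langle (G)\cdot(L),(H)\rangle=1$ if $(L)=(H)$ and $0$ otherwise, so only the pair $(K,L)=(G,H)$ survives and
\[
\langle ab,(H)\rangle \;=\; a_G\,b_H .
\]

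To finish, $b_H\neq 0$ by (iii), and $a_G\neq 0$ by (i) combined with the remark preceding the lemma, which shows that any invertible element of $A(G)$ has a nonzero coefficient at $(G)$. Therefore $\langle ab,(H)\rangle\neq 0$, i.e.\ $(H)\in\Phi(ab)$. The only step that needs genuine care is the subconjugacy constraint on orbit types in Burnside-ring products; the rest is a short bookkeeping reduction driven entirely by the maximality hypothesis (ii), which was chosen precisely to kill every contribution outside the $(K)=(G)$ row, and by the invertibility of $a$, which keeps that row itself from vanishing.
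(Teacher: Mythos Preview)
Your argument is correct and follows essentially the same route as the paper's proof: both exploit that any orbit type appearing in $(K)\cdot(L)$ is subconjugate to both $(K)$ and $(L)$, then use the maximality hypothesis (ii) together with $(H)\notin\Phi(a)$ to force the only surviving contribution to come from $(K)=(G)$, and finally use invertibility of $a$ to ensure $a_G\neq 0$. Your version is in fact a bit cleaner---you obtain the explicit identity $\langle ab,(H)\rangle=a_G\,b_H$, whereas the paper writes $a=\pm(G)+x$, $b=n(G)+m(H)+y$, expands, and argues that the cross terms cannot contain $(H)$.
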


\begin{proof}
Since the element $a$ is invertible, we have $a=\pm (G)+x$ and $b:=n(G)+m(H)+y$, where $m\not=0$. Then we have 
\[
ab=(\pm (G)+x)(n(G)+m(H)+y)=\pm n(G)\pm m(H)+n x+xy.
\]
Since $(L)\in \Phi((K)(K'))$ implies that $L=gKg^{-1}\cap K'$ for some $g\in G$, it follows that $(H)\notin \Phi(x)\cup\Phi(xy)$, thus $\langle ab,(H)\rangle=\pm m\not=0$.
\end{proof}
\begin{corollary} \label{cor:prod} Consider the element 
\[
\omega:=\prod_{i}\deg_{\cV_i},\quad \deg_{\cV_i}\not=\deg_{\cV_{i'}}\;\;\text{ for }\;\; i\not=i'.
\]
Denote by $\Psi(\omega)$ the set of all maximal elements $(H)$ in 
$\bigcup_{i} \Phi(\deg_{\cV_i})\setminus \{(G)\}$ such that if $(H)\in\Phi(\deg_{\cV_i})$ then $(H)\notin\Phi(\deg_{\cV_{i'}})$ for $i'\not=i$. Then for every $(H)\in \Psi(\omega)$, $(H)\in \Phi(\omega-(G))$.

\end{corollary}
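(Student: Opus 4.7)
The plan is to proceed by induction on the number $N$ of factors in $\omega$, reducing the inductive step to a direct application of Lemma~\ref{lem:max}. The base case $N=1$ is immediate since then $\omega=\deg_{\cV_1}$ and $\Psi(\omega)$ consists of certain orbit types already in $\Phi(\omega)\setminus\{(G)\}$.

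For the inductive step, given $(H)\in\Psi(\omega)$, let $i_0$ denote the unique index with $(H)\in\Phi(\deg_{\cV_{i_0}})$. I would factor $\omega=ab$ by setting $b:=\deg_{\cV_{i_0}}$ and $a:=\prod_{i\neq i_0}\deg_{\cV_i}$, observe that $a$ is invertible as a product of invertible basic degrees, and then apply Lemma~\ref{lem:max} to conclude $(H)\in\Phi(ab)=\Phi(\omega)$, whence $(H)\in\Phi(\omega-(G))$ since $(H)\neq(G)$.

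The main obstacle is verifying the two nontrivial hypotheses of Lemma~\ref{lem:max}: that $(H)\notin\Phi(a)$ and that $(H)$ is maximal in $\Phi(a)\cup\Phi(b)\setminus\{(G)\}$. This is delicate because $\Phi(a)$ can be strictly larger than $\bigcup_{i\neq i_0}\Phi(\deg_{\cV_i})$, so neither property is immediate from the definition of $\Psi(\omega)$. The key technical input will be the Burnside-ring product formula $(K)(K')=\sum_g(gKg^{-1}\cap K')$ (summed over appropriate double cosets), which yields the structural fact already used inside the proof of Lemma~\ref{lem:max}: any $(L)\in\Phi((K_1)\cdots(K_s))$ is subconjugate to each $K_j$. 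Expanding $a=\prod_{i\neq i_0}(\epsilon_i(G)+x_i)$ with $x_i:=\deg_{\cV_i}-\epsilon_i(G)$ then shows that every $(L)\in\Phi(a)\setminus\{(G)\}$ is subconjugate to some $(K)\in\Phi(\deg_{\cV_i})\setminus\{(G)\}$ for some $i\neq i_0$.

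With this observation in hand, both hypotheses will follow from the defining properties of $\Psi(\omega)$. A strict chain $(H)<(L)$ with $(L)\in\Phi(a)\setminus\{(G)\}$ would produce some $(K)\in\Phi(\deg_{\cV_i})\setminus\{(G)\}$ with $(H)<(K)$, contradicting the maximality clause in the definition of $\Psi(\omega)$; the analogous argument handles $\Phi(b)$. Similarly, if $(H)\in\Phi(a)$, then $(H)\leq(K)$ with $(K)\in\Phi(\deg_{\cV_i})\setminus\{(G)\}$ and $i\neq i_0$, and maximality of $(H)$ forces $(H)=(K)$, contradicting the uniqueness of $i_0$ asserted in the definition of $\Psi(\omega)$. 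These verifications complete the induction.
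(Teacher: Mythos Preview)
Your argument is correct and is exactly the route the paper intends: the corollary is stated immediately after Lemma~\ref{lem:max} without proof, so the expected argument is precisely to factor $\omega=ab$ with $b=\deg_{\cV_{i_0}}$ and $a=\prod_{i\neq i_0}\deg_{\cV_i}$, verify the hypotheses of Lemma~\ref{lem:max}, and invoke it. Your identification and handling of the real work---showing that every $(L)\in\Phi(a)\setminus\{(G)\}$ lies below some $(K)\in\Phi(\deg_{\cV_i})\setminus\{(G)\}$ with $i\neq i_0$, via the Burnside product structure $(K)(K')=\sum(gKg^{-1}\cap K')$---is exactly the mechanism used inside the proof of Lemma~\ref{lem:max} itself, so you are not importing anything foreign.

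One cosmetic remark: the induction on $N$ is superfluous. You never invoke the inductive hypothesis; the argument for general $N$ is a single direct application of Lemma~\ref{lem:max} once the factorization $\omega=ab$ is chosen. You can simply drop the inductive framing and present the proof as a one-step reduction.
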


\subsection{Chain of $n$ elements with symmetries $\Gamma=\mathbb{Z}_{2}$}

\begin{figure}[ptb]
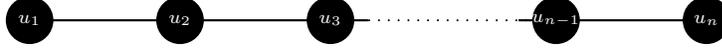

\begin{center}
\vskip2cm\hskip-9cm 
\psline(0,1)(4.5,1)
\psline[linestyle=dotted](4.5,1)(7,1)
\psline(6.5,1)(9,1)
\psdots[linewidth=8pt](0,1)(9,1)(2,1)(4,1)(7,1)
\rput(0,1){\scriptsize \white $u_1$}
\rput(2,1){\scriptsize \white $u_2$}
\rput(4,1){\scriptsize \white $u_3$}
\rput(7,1){\scriptsize\white $u_{n-1}$}
\rput(9,1){\scriptsize\white $u_{n}$}

\end{center}
\caption{Chain of $n$ elements.}
\label{fig:chain}
\end{figure}

For the chain (see Figure \ref{fig:chain}) the matrix $\Delta_{G}$ is given by
\[
\Delta_{G}=\left(
\begin{array}
[c]{ccccccc}
-1 & 1 & 0 &  & 0 & 0 & 0\\
1 & -2 & 1 &  & 0 & 0 & 0\\
&  &  &  &  &  & `\\
0 & 0 & 0 &  & 1 & -2 & 1\\
0 & 0 & 0 &  & 0 & 1 & -1
\end{array}
\right)  .
\]
In this case the matrix $\Delta_{G}$ commutes with $\Gamma=\mathbb{Z}_{2}$,
where the generator $\xi\in\bz_{2}$ acts on $j\in\{1,2,\dots,n\}$,
$\xi(j)=n+1-j\ $(mod $n$). Then for $j=0,1,\dots,n-1$ we have the eigenvalues
$\mu_{j}:=-\omega_{j}^{2}$ of $\Delta_{G}$ (see \cite{caputo}), where
\[
\omega_{j}=2\sin\frac{j\pi}{2n},\;\;\text{ and }\;\;0=\omega_{0}<\omega
_{1}<\dots<\omega_{n-1}.
\]
The corresponding eigenvector to $\mu_{j}$ is given by
\[
v_{j}:=(v_{j}^{1},v_{j}^{2},\dots,v_{j}^{n}),\;\;\;v_{j}^{k}=\sqrt{\frac{2}
{n}}\cos\left(  \frac{j\pi}{n}(k-\textstyle{\frac{1}{2}})\right)
,\;\;k=1,2,\dots,n.
\]
The eigenspace of $\Delta_{G}$ corresponding to $\mu_{j}$ is $E(\mu
_{j})=\text{span}(v_{j})$.

For the above action of $\bz_{2}$ on $V:=\br^{n}$, $\xi$ acts on the
eigenspace $E(\mu_{j})$ by multiplication by $-1$ for $j$ odd and trivially
for $j$ even. Therefore, for $G:=\bz_{2}\times\bz_{2}\times O(2)$, we have the
following $G$-invariant decomposition of $V$
\[
V=\bigoplus_{\text{$j$ even}}E(\mu_{j})\oplus\bigoplus_{\text{$j$ odd}}
E(\mu_{j})=:V_{0}^{-}\oplus V_{1}^{-}.
\]
Let $\mathscr E(\xi_{j,k})$ be the eigenspace corresponding to the eigenvalue
$\xi_{j,k}:=\frac{k^{2}+\lambda^{2}\mu_{j}}{k^{2}+1}$ of the operator
$\mathscr A$ defined in $\mathscr H$. The eigenspace $\mathscr E(\xi_{j,k})$
is equivalent to the irreducible $G$-representation $\mathcal{V}_{0,k}^{-}$,
for $j$ even or $\mathcal{V}_{1,k}^{-}$ for $j$ odd.

Then, we have the following basic degrees
\begin{align*}
\deg_{\mathcal{V}_{0,k}^{-}}  &  =(G)-(\bz_{2}\times D_{2k}^{d})\\
\deg_{\mathcal{V}_{1,k}^{-}}  &  =(G)-(V_{4}^{\bz_{2}^{-}}\times_{\bz_{2}
}D_{2k}).
\end{align*}
where $D_{2k}^{d}\leq\bz_{2}\times O(2)$ is given
\[
D_{2k}^{d}:=\{((-1)^{l},\zeta^{l}),((-1)^{l},\zeta^{l}\kappa):l=0,1,\dots
,2k-1\},\quad\zeta:=e^{\frac{i\pi}{k}}.
\]
The subgroup $V_{4}^{\bz_{2}^{-}}\times_{\bz_{2}}D_{2k}\leq V_{4}\times O(2)$
($V_{4}:=\bz_{2}\times\bz_{2}$) is given by
\[
V_{4}^{\bz_{2}^{-}}\times_{\bz_{2}}D_{2k}=\{(g,h)\in V_{4}\times
O(2):\varphi(g)=\psi(h)\},
\]
where $\varphi:V_{4}\rightarrow\bz_{2}$ is a homomorphism such that
$\ker(\vp)=\{(1,1),(-1,-1)\}=:\bz_{2}^{-}$ and $\psi:D_{2k}\rightarrow\bz_{2}$
is a homomorphism with $\ker(\psi)=D_{k}$.

In general, most of normal modes are non resonant, i.e. for $j\not =k$ we have
$\omega_{j}\neq l\omega_{k}$ for all $l\in\mathbb{N}$. However, there are some
special cases where resonances of this kind exist. In order to show our
results in a non-resonant case, we chose the case $n=4$. In this case we have
\[
\omega_{0}=0<\omega_{1}=\sqrt{2-\sqrt{2}}<\omega_{2}=\sqrt{2}<\omega_{3}
=\sqrt{2+\sqrt{2}},
\]
and
\[
\frac{1}{\omega_{3}}<\frac{1}{\omega_{2}}<\frac{1}{\omega_{1}}<\frac{3}
{\omega_{3}}<\frac{3}{\omega_{2}}<\frac{3}{\omega_{1}}<\dots<\frac
{2k+1}{\omega_{3}}<\frac{2k+1}{\omega_{2}}<\frac{2k+1}{\omega_{1}}<\dots
\]
Therefore, we have the following result: \vs

\begin{theorem}
\label{th:chain} Consider the network consisting of a chain of four elements
and assume that $g^{\prime}(0)=0$ and $\left\vert g(u)\right\vert >\left\vert
u\right\vert $ for $\left\vert u\right\vert >M$. Suppose $\lambda>\frac
{1}{\omega_{3}}$  such that for all $k\in2\mathbb{N}-1$ we have
$\lambda\not =\frac{k}{\omega_{j}}$, $j=1,2,3$. Then the system
\eqref{eq:system1} admits a $p$-periodic solution $u(t)$. Moreover,

\begin{itemize}
\item[(a)] If $\frac{2k+1}{\omega_{1}}<\lambda< \frac{2k+3}{\omega_{3}}$ then
there exists a $p$ periodic solution $u(t)$ such that the solution
$x(t)=u(\lambda t)$ to \eqref{eq:system2} has the isotropy group
$\bz_{2}\times D_{4m+2}^{d}$ for some $m$.

\item[(b)] If 
$\frac{2k+1}{\omega_{2}}<\lambda< \frac{2k+1}{\omega_{1}}$ then there exist
two $p$ periodic solutions such that the solutions $x(t)=u(\lambda t)$ to
\eqref{eq:system2} has the isotropy groups $\bz_{2}\times D_{4m+2}^{d}$ and
$V_{4}^{\bz_{2}^{-}}\times_{\bz_{2}}D_{4m^{\prime}+2}$ for some $m$,
$m^{\prime}$.
\item[(c)] If $\frac{2k+1}{\omega_{3}}<\lambda< \frac{2k+1}{\omega_{2}}$  then there exist
two $p$ periodic solutions such that the solutions $x(t)=u(\lambda t)$ to
\eqref{eq:system2} has the isotropy group
$V_{4}^{\bz_{2}^{-}}\times_{\bz_{2}}D_{4m+2}$ for some $m$.
\end{itemize}
\end{theorem}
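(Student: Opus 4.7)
The plan is to apply the abstract existence result Theorem \ref{thm:exist2} with $\Gamma=\bz_2$, $V=\br^4$ with the chain action, and the fixed-point reduction group $H=\bz_2^-=\bz_2^d$ (that is, $m=1$ in \eqref{H}). The choice of $H$ is dictated by two requirements: the Goldstone eigenvalue $\mu_0=0$, carried by the $\Gamma$-trivial eigenvector $v_0$, must be killed by the reduction, and the hypothesis $\lambda\neq k/\omega_j$ for odd $k$ and $j=1,2,3$ must ensure $0\notin\sigma(\mathscr A^H)$. Since $\mathscr H^{\bz_2^-}$ consists of anti-periodic functions, supported on the isotypical components $\bv_k$ with $k$ odd, both requirements are met (the $k=0$ component carrying $v_0$ disappears, and the nonresonance hypothesis precisely excludes $\xi_{j,k}=0$ for the remaining $(j,k)$). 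Verification of (A1)--(A3) for $f(u)=\Delta_G u+g(u)$ is immediate from the $\bz_2$-symmetry of the chain, oddness of $g$, and the hypothesis $|g(u)|>|u|$ for $|u|>M$.

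The next step is to expand the $G$-degree of $\mathscr A$ on $B(\mathscr H^H)$ according to \eqref{eq:basic-product}. A direct check on the eigenvectors $v_j$ shows that the generator $\xi\in\Gamma$ acts by $(-1)^j$ on $E(\mu_j)$, so $v_0,v_2$ lie in the isotypical component $V_0^-$ while $v_1,v_3$ lie in $V_1^-$. Consequently each negative eigenvalue $\xi_{j,k'}<0$ contributes $\deg_{\mathcal{V}_{1,k'}^-}$ when $j\in\{1,3\}$ and $\deg_{\mathcal{V}_{0,k'}^-}$ when $j=2$. Using $\bigl(\deg_{\mathcal{V}_{1,k'}^-}\bigr)^2=(G)$, whenever both $\xi_{1,k'}$ and $\xi_{3,k'}$ are negative (which happens exactly when $\lambda>k'/\omega_1$) the corresponding pair of factors cancels.

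Now read off the surviving factors in each interval from the ordering $1/\omega_3<1/\omega_2<1/\omega_1<3/\omega_3<\cdots$ displayed in the paper. In case (a) both $\xi_{1,k'}$ and $\xi_{3,k'}$ are negative for every $k'\in\{1,3,\ldots,2k+1\}$, so the product collapses to $\prod_{m=0}^{k}\deg_{\mathcal{V}_{0,2m+1}^-}$. In case (b) the same cancellation holds for $k'\leq 2k-1$, while at $k'=2k+1$ only the $j=2$ and $j=3$ factors are present, yielding $\prod_{m=0}^{k}\deg_{\mathcal{V}_{0,2m+1}^-}\cdot\deg_{\mathcal{V}_{1,2k+1}^-}$. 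In case (c) only $j=3$ is present at $k'=2k+1$, giving $\prod_{m=0}^{k-1}\deg_{\mathcal{V}_{0,2m+1}^-}\cdot\deg_{\mathcal{V}_{1,2k+1}^-}$.

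Finally, to extract the isotropy types invoke Corollary \ref{cor:prod}. The maximal non-$(G)$ orbit type of $\deg_{\mathcal{V}_{0,2m+1}^-}$ is $(\bz_2\times D_{4m+2}^d)$ and that of $\deg_{\mathcal{V}_{1,2k+1}^-}$ is $(V_4^{\bz_2^-}\times_{\bz_2}D_{4k+2})$, and across all the surviving factors in each case these maximal orbit types are pairwise distinct. Hence by Corollary \ref{cor:prod} each such $(H)$ lies in $\Phi(G\mbox{\rm -}\deg(\mathscr A,B(\mathscr H^H))-(G))=\Phi\bigl(-\omega^H\bigr)$, so the corresponding coefficient of $\omega^H$ is nonzero. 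Applying Theorem \ref{thm:exist2} to each such orbit type yields a non-constant $2\pi$-periodic solution $x(t)$ of \eqref{eq:system2}, hence a $p$-periodic solution $u(t)=x(t/\lambda)$ of \eqref{eq:system1}, with isotropy at least the claimed subgroup. The main technical obstacle is tracking which $(j,k')$ pairs remain after the $\deg^2=(G)$ cancellation in each of the three intervals, and confirming that the surviving basic degrees have pairwise distinct maximal orbit types so that the hypothesis of Corollary \ref{cor:prod} is satisfied.
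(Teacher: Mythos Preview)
Your proposal is correct and follows the paper's own route: the paper does not write out a separate proof but assembles exactly the ingredients you use (the reduction to $H=\bz_2^d$, the identification of $E(\mu_j)$ with $\cV_0^-$ or $\cV_1^-$ according to the parity of $j$, the basic degrees $\deg_{\mathcal V_{0,k}^-}$ and $\deg_{\mathcal V_{1,k}^-}$, and the ordering of the thresholds $k/\omega_j$), and then states the theorem as a direct application of Theorem~\ref{thm:exist2}, formula~\eqref{eq:basic-product}, and Corollary~\ref{cor:prod}. Your cancellation of the paired $\deg_{\mathcal V_{1,k'}^-}$ factors via $(\deg_{\cV})^2=(G)$ and the resulting reduced products in each of the three $\lambda$-ranges are exactly the computations the paper has in mind.

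One small tightening: Corollary~\ref{cor:prod} requires the orbit type $(H)$ to be \emph{maximal} in the union $\bigcup_i\Phi(\deg_{\cV_i})\setminus\{(G)\}$, not merely pairwise distinct, and since $D_{2}^d\subset D_{4m+2}^d$ whenever $1\mid(2m+1)$ (more generally $D_{4m+2}^d\subset D_{4m'+2}^d$ iff $(2m+1)\mid(2m'+1)$), not every surviving $(\bz_2\times D_{4m+2}^d)$ is maximal. However the factor with the top index $k'=2k+1$ always is, and the theorem only asserts existence ``for some $m$'', so your argument delivers exactly what is claimed.
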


The periodic solutions with the isotropy groups of this theorem have symmetries:

\begin{itemize}
\item For $j$ being an even number, a non-zero function in $x$ with the orbit
type $(\bz_{2}\times D_{2k}^{d})$ has for all $t\in\br$
\[
x_{j}(t)=x_{j}(t+\textstyle{\frac{2\pi}{k}})=-x_{j}(t+\textstyle{\frac{\pi}
{k}})=x_{n+1-j}(t)=x_{j}(-t).
\]
These functions are $\frac{2\pi}{k}$-periodic and $\frac{\pi}{k}$-antiperiodic
functions which are symmetric with respect to time reversion, and symmetric
under the permutation $\xi(j)=n+1-j$

\item For $j$ being an odd number, a non-zero function in $x$ has the orbit
type\break$(V_{4}^{\bz_{2}^{-}}\times_{\bz_{2}}D_{2k})$. Therefore, we have
for all $t\in\br$
\[
x_{j}(t)=x_{j}(t+\textstyle{\frac{2\pi}{k}})=-x_{j}(t+\textstyle{\frac{\pi}
{k}})=-x_{n+1-j}(t)=x_{j}(-t).
\]
These functions are $\frac{2\pi}{k}$-periodic, $\frac{\pi}{k}$-antiperiodic
functions which are symmetric with respect to time reversion, and
anti-symmetric under the permutation $\xi(j)=n+1-j$.
\end{itemize}

\vs

\subsection{Cycle of $n$ elements with $\Gamma=D_{n}$-symmetries}

\begin{figure}[H]
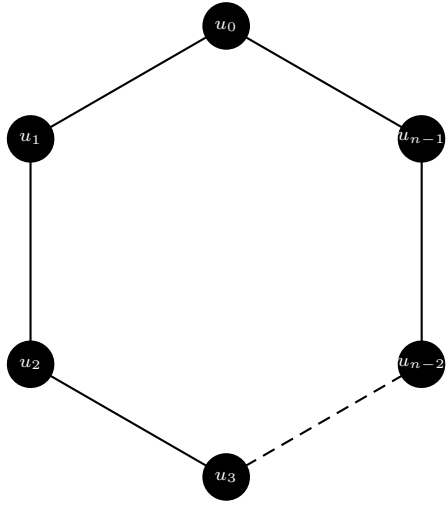

\begin{center}
\vskip3cm

\rput(0,0){\psline(2.6,-1.5)(2.6,1.5)}
\rput{60}(0,0){\psline(2.6,-1.5)(2.6,1.5)}
\rput{120}(0,0){\psline(2.6,-1.5)(2.6,1.5)}
\rput{180}(0,0){\psline(2.6,-1.5)(2.6,1.5)}
\rput{-60}(0,0){\psline[linestyle=dashed](2.6,-1.5)(2.6,1.5)}
\rput{-120}(0,0){\psline(2.6,-1.5)(2.6,1.5)}
\rput{60}(0,0){\rput(0,3){\psdots[linewidth=8pt](0,0)\rput{-60}(0,0){\scriptsize \white $u_1$}}}
\rput{0}(0,0){\rput(0,3){\psdots[linewidth=8pt](0,0)\rput(0,0){\scriptsize \white $u_0$}}}
\rput{120}(0,0){\rput(0,3){\psdots[linewidth=8pt](0,0)\rput{-120}(0,0){\scriptsize \white $u_2$}}}
\rput{-60}(0,0){\rput(0,3){\psdots[linewidth=8pt](0,0)\rput{60}(0,0){\scriptsize \white $u_{n-1}$}}}
\rput{-120}(0,0){\rput(0,3){\psdots[linewidth=8pt](0,0)\rput{120}(0,0){\scriptsize \white $u_{n-2}$}}}
\rput{180}(0,0){\rput(0,3){\psdots[linewidth=8pt](0,0)\rput{180}(0,0){\scriptsize \white $u_3$}}}

\end{center}
\vskip3cm
\caption{Cycle of $n$ elements with $\Gamma=D_{n}$-symmetries}\label{fig:chain}
\end{figure}

For the cycle the matrix $\Delta_{G}$ is given by
\[
\Delta_{G}=\left(
\begin{array}
[c]{ccccccc}
-2 & 1 & 0 &\dots  & 0 & 0 & 1\\
1 & -2 & 1 & \dots & 0 & 0 & 0\\
0&1 & -2 & \dots & 0 & 0 & 0\\
\vdots&\vdots  & \vdots &\ddots  & \vdots & \vdots & \vdots\\
0 & 0 & 0 & \dots & -2 & 1&0\\
0 & 0 & 0 & \dots & 1 & -2 & 1\\
1 & 0 & 0 & \dots & 0 & 1 & -2
\end{array}
\right)  .
\]
In this case the matrix $\Delta_{G}$ commutes with $\Gamma={D}_{n}
=\{1,\zeta,\dots,\zeta^{n-1},\xi,\xi\zeta,\dots,\xi\zeta^{n-1}\}$, where
$\zeta$, $\xi$ are the permutations of the symbols $\{0,1,2,\dots,n-1\}$ such
that $\zeta(j)=j+1$ and $\xi(j)=n-j$ (mod $n$), acting on $V:=\mathbb{R}^{n}$
by
\[
\gamma(x_{0},x_{1},\dots,x_{n-1})=(x_{\zeta(0)},x_{\zeta(1)},\dots
,x_{\zeta(n-1)}),\quad\xi(x_{0},x_{1},\dots,x_{n-1})=(x_{\xi(0)},x_{\xi
(1)},\dots,x_{\xi(n-1)}).
\]

The eigenvalues of $\triangle_{G}$ can be easily found. Consider
$\triangle_{G}:{\mathbb{C}}^{n}\rightarrow{\mathbb{C}}^{n}$ and put
\[v_{j}:=(1,\zeta^{j},\zeta^{2j},\dots,\zeta^{(n-1)j})^{T}\]
with $\zeta=e^{2\pi
i}$, $j=0,1,\dots,n-1.$ Clearly, $v_{j}$ is a complex eigenvector
corresponding to the eigenvalue
\[
\mu_{j}:=-2+\zeta^{j}+\zeta^{-j}=-4\sin^{2}\frac{\pi j}{n}.
\]
Therefore, the real part $e_{j}$ of $v_{j}$ is a real eigenvector of
$\triangle_{G}$, and for $0<j<n-1$, $j\not =\frac{n}{2}$, the imaginary part
$f_{j}$ of $v_{j}$ is another real eigenvector of $\triangle_{G}$. In the case
$j=0$ we have the eigenvector $e_{0}=(1,1,1,\dots,1)^{T}$ and if $r=\frac
{n}{2}$ is an integer, we also have $e_{r}=(1,-1,1,\dots,-1)^{T}$. Denote by
$V_{j}$, $j\not =0,\frac{n}{2}$, the subspace of $V$ spanned by $e_{j}$ and
$f_{j}$. Since $v_{j}=e_{j}+if_{j}$, $V_{j}$ has a natural complex structure
isomorphic to ${\mathbb{C}}$. One can easily recognize that $V_{j}$ is $D_{n}
$-invariant and equivalent to the irreducible $D_{n}$-representation
$\mathcal{V}_{j}\simeq{\mathbb{C}}$, where $\zeta z=\zeta^{j}\cdot z$ (here
`$\cdot$' denotes usual complex multiplication) and $\xi z:=\overline{z}$,
$z\in{\mathbb{C}}$. Since the $D_{n}$-representations $V_{j}=V_{n-j}$, for
$0<j<\frac{n}{2}$, $V_{j}$ is the $j$-th isotypical component of $V$. It is
clear that the $\mathcal{V}_{j}$-isotypical dimension of $V_{j}$ is $1$ for
$0\leq j\leq r:=\lfloor\frac{n}{2}\rfloor$, so we have the isotypical
multiplicities $m_{j}(\mu_{j})=1$.

As an example, we consider the case $n=4$. The character table for the related
irreducible $D_{4}\times\bz_{2}$-representations are listed in Table
\ref{tab:D4-Ch}. \begin{table}[H]
\centering
\begin{tabular}
[c]{l|rrrrrrrrrr}
\toprule & $(1,1)$ & $(\kappa,1)$ & $(i,1)$ & $(-1,1)$ & $(1,-1)$ & $(\kappa
i,1)$ & $(\kappa,-1)$ & $(i,-1)$ & $(-i,-1)$ & $(\kappa i, -1)$\\
\midrule ${\mathcal{V}_{0}^{-}}$ & $1$ & $1$ & $1$ & $1$ & $-1$ & $1$ & $-1$ &
$-1$ & $-1$ & $-1$\\
${\mathcal{V}_{1}^{-}}$ & $2$ & $0$ & $0$ & $-2$ & $-2$ & $0$ & $0$ & $0$ &
$2$ & $0$\\
${\mathcal{V}_{2}^{-}}$ & $1$ & $-1$ & $-1$ & $1$ & $1$ & $1$ & $-1$ & $-1$ &
$1$ & $1$\\
\bottomrule 
\end{tabular}
\vskip 1em \caption{character table of $D_{4}\times\bz_{2}$}
\label{tab:D4-Ch}
\end{table}We have the following $D_{4}\times\bz_{2}\times O(2)$-basic degrees
for $k=1,2,3...$:
\begin{align*}
\deg_{{\mathcal{V}_{1,k}^{-}}}=\;  &
(\amal{D_4^p}{O(2)}{}{}{})-(\amal{\tD_2^p}{D_{2k}}{\bz_{2}}{\tD_2^\td}{})-(\amal{D_4^p}{D_{4k}}{D_{4}}{\bz_2^z}{})-(\amal{D_2^p}{D_{2k}}{\bz_{2}}{D_2^d}{})\\
&
+(\amal{D_2^p}{D_{2k}}{D_{2}}{\bz_2^z}{\bz_2^p})+(\amal{\bz_2^p}{D_{2k}}{\bz_{2}}{\bz_2^z}{})+(\amal{\tD_2^p}{D_{2k}}{D_{2}}{\bz_2^z}{\bz_2^p}),\\
\deg_{{\mathcal{V}_{2,k}^{-}}}=\;  &
(\amal{D_4^p}{O(2)}{}{}{})-(\amal{D_4^p}{D_{2k}}{\bz_{2}}{\tD_2^p}{}).
\end{align*}

The values $k/\omega_{j}$ for $k\in2\mathbb{N}+1$ and $j=1,2$ (with
$\omega_{1}=\sqrt{2}$ and $\omega_{2}=\allowbreak2$) have the order
\[
\frac{1}{\omega_{2}}<\frac{1}{\omega_{1}}<\frac{3}{\omega_{2}}<\frac{3}
{\omega_{1}}<...<\frac{2k+1}{\omega_{2}}<\frac{2k+1}{\omega_{1}}<\dots
\]
Therefore, we have the following result: \vs

\begin{theorem}
\label{th:chain copy(1)} Consider the network consisting of a chain of four
elements and assume that $g^{\prime}(0)=0$ and $\left\vert g(u)\right\vert
>\left\vert u\right\vert $ for $\left\vert u\right\vert >M$. Suppose
$\lambda>\frac{1}{\omega_{2}}$ such that for all $k\in2\mathbb{N}-1$ we
have $\lambda\not =\frac{k}{\omega_{j}}$, $j=1,2$. Then the system
\eqref{eq:system1} admits a $p$-periodic solution $u(t)$. Moreover,

\begin{itemize}
\item[(a)] If $\frac{1}{\omega_{2}}<\lambda<\frac{1}{\omega_{1}}$ then there
exists an orbit of  $p$ periodic solutions $u(t)$ such that the solution $x(t)=u(\lambda
t)$ to \eqref{eq:system2} has the isotropy group
$\amal{D_4^p}{D_{2m}}{\bz_{2}}{\tD_2^p}{}$  for some $m\in \bn$.

\item[(b)] If $\frac{1}{\omega_{1}}<\lambda$ then there
exist four orbits of  $p$ periodic solutions such that the solutions $x(t)=u(\lambda t)$
to \eqref{eq:system2} have the isotropy groups
$\amal{D_4^p}{D_{2m}}{\bz_{2}}{\tD_2^p}{}$,  $\amal{D_4^p}{D_{4m}}{D_{4}}{\bz_2^z}{}$, $\amal{D_2^p}{D_{2m}}{\bz_{2}}{D_2^d}{}$, and $\amal{D_4^p}{D_{2m}}{\bz_{2}}{\tD_2^p}{}$  for some $m\in \bn$,
\end{itemize}
\end{theorem}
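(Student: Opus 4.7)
The plan is to apply Theorem \ref{thm:exist2} with the twisted subgroup $H = \bz_2^-$ (the case $m = 1$ of $\bz_{2m}^d$), which restricts the problem to odd functions in $\mathscr H^H$ and thereby excludes constants. This bypass is essential because $\triangle_G$ carries the Goldstone eigenvalue $\mu_0 = 0$. Once the spectral hypothesis is secured, the computation reduces to evaluating the invariant $\omega^H$ via \eqref{eq:basic-product} and reading off its maximal non-$(G)$ coefficients through Corollary \ref{cor:prod}.

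I would first verify the non-resonance hypothesis. The spectrum of $\mathscr A^H$ consists of the eigenvalues $\xi_{j,k} = (k^2 + \lambda^2 \mu_j)/(k^2+1)$ for odd $k$ and $j \in \{0, 1, 2\}$; on $\mathscr H^H$ the Goldstone component yields only strictly positive $\xi_{0,k} = k^2/(k^2+1)$, while the standing assumption $\lambda \neq k/\omega_j$ for $j = 1, 2$ forces $\xi_{j,k} \neq 0$ otherwise. Hence $0 \notin \sigma(\mathscr A^H)$ and Theorem \ref{thm:exist2} is available.

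For case (a), the range $1/\omega_2 < \lambda < 1/\omega_1$ leaves exactly one factor in the product, giving $\omega^H = (G) - \deg_{\mathcal V_{2,1}^-}$; the unique maximal non-$(G)$ coefficient in the explicit expansion of $\deg_{\mathcal V_{2,1}^-}$ is $(\amal{D_4^p}{D_2}{\bz_2}{\tD_2^p}{})$, so Theorem \ref{thm:exist2} produces a $W(H)$-orbit of solutions of this type with $m = 1$. For case (b), $\lambda > 1/\omega_1$ forces both $\deg_{\mathcal V_{2,1}^-}$ and $\deg_{\mathcal V_{1,1}^-}$ into the product, and further distinct factors $\deg_{\mathcal V_{j,2m-1}^-}$ are adjoined as $\lambda$ grows; since $(\deg_{\mathcal V_{j,k}^-})^2 = (G)$, each appears with multiplicity one. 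I would then invoke Corollary \ref{cor:prod}: every isotropy that is maximal in exactly one factor contributes to $\Phi(\omega^H)$, and inspection of the given formulas for $\deg_{\mathcal V_{1,k}^-}$ and $\deg_{\mathcal V_{2,k}^-}$ shows that the four claimed isotropies arise in precisely this way. Applying the existence property of the $G$-degree completes the argument, after which $u(t) = x(t/\lambda)$ converts $2\pi$-periodic solutions of \eqref{eq:system2} back to $p$-periodic solutions of \eqref{eq:system1}.

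The main obstacle is verifying the single-factor maximality required by Corollary \ref{cor:prod}: one must check, from the explicit twisted-subgroup expressions, that each of the four claimed maximal isotropies occurs in exactly one of the factors $\deg_{\mathcal V_{j,k}^-}$ and is neither produced nor absorbed by any later factor entering as $\lambda$ grows. Fortunately the folding index $k$ embedded in the dihedral subgroup $D_{2k}$ or $D_{4k}$ of each orbit type pins down its origin, so the verification reduces to a careful comparison across the explicit lists, but it is the step on which the multiplicity count truly rests.
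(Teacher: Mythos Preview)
Your approach is essentially the one the paper uses: reduce to $\mathscr H^{H}$ with $H=\bz_2^d$, verify $0\notin\sigma(\mathscr A^H)$, evaluate $\omega^H$ through the product formula \eqref{eq:basic-product}, and extract the maximal orbit types via Corollary~\ref{cor:prod} before invoking Theorem~\ref{thm:exist2}. One small terminological correction: the fixed-point space $\mathscr H^{\bz_2^d}$ consists of $\pi$-\emph{antiperiodic} functions $x(t)=-x(t+\pi)$ (equivalently, functions with only odd Fourier modes), not odd functions $x(-t)=-x(t)$; the latter would correspond to the subgroup $D_1^z$ of Remark~3.1, and while both choices exclude constants, the basic degrees and resulting isotropy types differ.
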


\vs

\subsection{Truncated octahedron graph with $\Gamma=S_{4}$ octahedral
symmetries}

\begin{figure}[t]
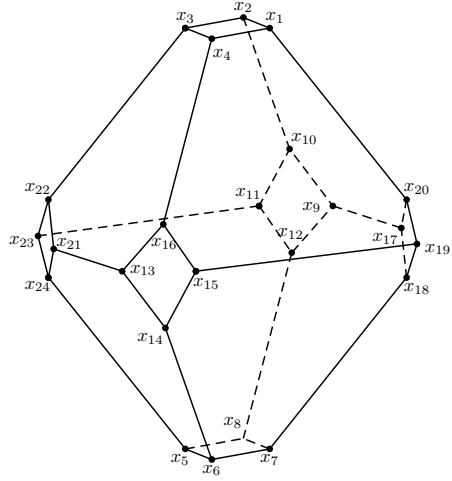

\vglue3.cm\hskip7cm
\scalebox{.7}{\psline(-.8,4)(-3.4,.74)
\psline(-3.4,-.74)(-.8,-4)
\psline(.8,-4)(3.4,-.74)
\psline(.8,4)(3.4,.74)
\psline(-3.3,-0.2)(-2,-.62)
\psline(-.6,-.62)(3.6,-0.1)
\psline[linestyle=dashed](3.3,0.2)(2,.62)
\psline[linestyle=dashed](.6,.62)(-3.6,0.05)
\psline(-.3,3.8)(-1.22,.27)
\psline(-1.18,-1.7)(-.3,-4.2)
\psline[linestyle=dashed](.3,4.2)(1.18,1.7)
\psline[linestyle=dashed](1.22,-.27)(.3,-3.8)
\psline(-.8,4)(.3,4.2)(.8,4)(-.3,3.8)(-.8,4)
\psline(-.8,-4)(-.3,-4.2)(.8,-4)
\psline[linestyle=dashed](.8,-4)(.3,-3.8)(-.8,-4)
\psline(3.4,-.74)(3.6,-0.1)(3.4,.74)
\psline[linestyle=dashed](3.4,.74)(3.3,0.2)(3.4,-.74)
\psline(-3.4,-.74)(-3.3,-0.2)(-3.4,.74)(-3.6,0.05)(-3.4,-.74)
\psline(-.6,-.62)(-1.22,.27)(-2,-.62)(-1.18,-1.7)(-.6,-.62)
\psline[linestyle=dashed](.6,.62)(1.22,-.27)(2,.62)(1.18,1.7)(.6,.62)
\psdots
(.6,.62)(1.22,-.27)(2,.62)(1.18,1.7)(.6,.62) (-.6,-.62)(-1.22,.27)(-2,-.62)(-1.18,-1.7)(-.6,-.62)(3.4,.74)(3.3,0.2)(3.4,-.74)(-.8,4)(.3,4.2)(.8,4)(-.3,3.8)(-.8,4)(-.8,-4)(-.3,-4.2)(.8,-4)(-3.4,-.74)(-3.3,-0.2)(-3.4,.74)(-3.6,0.05)(-3.4,-.74)(3.6,-0.1)
\rput(-.8,4.2){$x_3$}
\rput(.3,4.4){$x_2$}
\rput(.9,4.2){$x_1$}
\rput(-.1,3.5){$x_4$}
\rput(.8,-4.2){$x_7$}
\rput(-.3,-4.4){$x_6$}
\rput(-.9,-4.2){$x_5$}
\rput(.1,-3.5){$x_8$}
\rput(3.6,.95){$x_{20}$}
\rput(3,0){$x_{17}$}
\rput(3.6,-.95){$x_{18}$}
\rput(4,-.2){$x_{19}$}
\rput(-3.6,-.9){$x_{24}$}
\rput(-3.9,-.05){$x_{23}$}
\rput(-3,-.1){$x_{21}$}
\rput(-3.6,.95){$x_{22}$}
\rput(1.46,1.9){$x_{10}$}
\rput(1.2,.1){$x_{12}$}
\rput(1.6,.6){$x_{9}$}
\rput(.4,.85){$x_{11}$}
\rput(-1.46,-1.9){$x_{14}$}
\rput(-1.2,-.1){$x_{16}$}
\rput(-1.6,-.6){$x_{13}$}
\rput(-.4,-.85){$x_{15}$}
}
\vskip3cm
\caption{Truncated octaheron graph with $\Gamma=S_{4}$ octahedral symmetries}\label{fig:octahedron}
\end{figure}

We can index the vertices of the cut-off octahedron by the  elements in $ (\sigma
,\tau)\in S_4\times S_4$, where $\sigma$ is a 2-cycle and $\tau
$ is a 4-cycle. The full conversion table corresponding to the annotated above model is given below:
\begin{alignat*}{3}
1\;&\leftrightarrow\; ((1,2),(1,2,3,4)),&\quad\; 5\;&\leftrightarrow
\; ((1,2),(1,4,3,2))\\
2\;&\leftrightarrow\; ((2,3),(1,2,3,4)),&\quad\; 6\;&\leftrightarrow
\; ((2,3),(1,4,3,2))\\
3\;&\leftrightarrow\; ((3,4),(1,2,3,4)),&\quad\; 7\;&\leftrightarrow
\; ((3,4),(1,4,3,2))\\
\psline(0,-.2)(8,-.2)
4\;&\leftrightarrow\; ((1,4),(1,2,3,4)),&\quad\; 8\;&\leftrightarrow
\; ((1,4),(1,4,3,2))\\
9\;&\leftrightarrow\; ((1,3),(1,3,2,4)),&\quad\; 13\;&\leftrightarrow
\; ((1,3),(1,4,2,3))\\
10\;&\leftrightarrow\; ((2,3),(1,3,2,4)),&\quad\; 14\;&\leftrightarrow
\; ((2,3),(1,4,2,3))\\
11\;&\leftrightarrow\; ((2,4),(1,3,2,4)),&\quad\; 15\;&\leftrightarrow
\; ((2,4),(1,4,2,3))\\
\psline(0,-.2)(8,-.2)
12\;&\leftrightarrow\; ((1,4),(1,3,2,4)),&\quad\; 16\;&\leftrightarrow
\; ((1,4),(1,4,2,3))\\
17\;&\leftrightarrow\; ((1,3),(1,3,4,2)),&\quad\; 21\;&\leftrightarrow
\; ((1,3),(1,2,4,3))\\
18\;&\leftrightarrow\; ((3,4),(1,3,4,2)),&\quad\; 22\;&\leftrightarrow
\; ((3,4),(1,2,4,3))\\
19\;&\leftrightarrow\; ((2,4),(1,3,4,2)),&\quad\; 23\;&\leftrightarrow
\; ((2,4),(1,2,4,3))\\
20\;&\leftrightarrow\; ((1,2),(1,3,4,2)),&\quad\; 24\;&\leftrightarrow
\; ((1,2),(1,2,4,3))
\end{alignat*}
The action of the full octahedral group $\mathbb O:=\bz_2\times
S_4)$ on the space $V:=\br^{24}$ can be easily described; for $\eta\in
S_4$ and $-1\in\bz_2$, we have for
$x=(x_1,x_2,\dots,x_{24})$, $x_j=:x_{\sigma,tau}$
\[
\eta(x_{\sigma,\tau}):=x_{\eta^{-1}\sigma\eta,\eta\tau\eta^{-1}}
, \quad(-1)x_{\sigma,\tau}:=-x_{\sigma,\tau}=x_{\sigma,\tau^{-1}}.
\]

\vskip.3cm
In order to identify the isotypical decomposition of $V$ first we consider the following character table:
\begin{center}
\begin{tabular}{|c|c|ccccc|}
\hline
Iso Dim.& $\chi_j$ & $()$& $(12)$ &$(12)(34)$ &$(123)$ &$(1234)$\\
\hline
 1&  $\chi_0$& 1 & 1& 1& 1 &1\\	
 1&$\chi_1$&1&$-1$&1&1&$-1$\\
 2&$\chi_2$&2&0&2&$-1$&0\\
 3&$\chi_3$&3&-1&$-1$&0&$1$\\
 3&$\chi_4$&3&$1$&$-1$&0&-1\\

 \hline
 \end{tabular}
\end{center}

Therefore, we have the following isotypical decomposition
\begin{gather*}
V=V_{0}\oplus V_{1}\oplus V_{2}\oplus V_{3}\oplus V_{4},\quad V_{0}
=\mathcal{V}_{0},\;V_{1}=\mathcal{V}_{1},\;V_{2}=\mathcal{V}_{2}
\oplus\mathcal{V}_{2},\\
V_{3}=\mathcal{V}_{3}\oplus\mathcal{V}_{3}\oplus\mathcal{V}_{3},\;V_{4}
=\mathcal{V}_{4}\oplus\mathcal{V}_{4}\oplus\mathcal{V}_{4}\text{.}
\end{gather*}

For this cube-like graph, the matrix $\triangle_{G}$ is given by
\vs\scalebox{.7}{
$\triangle_G=\left[\begin{array}
[c]{cccccccccccccccccccccccc}
-3 & 1 & 0 & 1 & 0 & 0 & 0 & 0 & 0 & 0 & 0 & 0 & 0 & 0 & 0 & 0 & 0 & 0 &
1 & 0 & 0 & 0 & 0\\
1 & -3 & 1 & 0 & 0 & 0 & 0 & 0 & 0 & 1 & 0 & 0 & 0 & 0 & 0 & 0 & 0 & 0 &
0 & 0 & 0 & 0 & 0 & 0\\
0 & 1 & -3 & 1 & 0 & 0 & 0 & 0 & 0 & 0 & 0 & 0 & 0 & 0 & 0 & 0 & 0 & 0 &
0 & 0 & 0 & 1 & 0 & 0\\
1 & 0 & 1 & -3 & 0 & 0 & 0 & 0 & 0 & 0 & 0 & 0 & 0 & 0 & 0 & 1 & 0 & 0 &
0 & 0 & 0 & 0 & 0 & 0\\
0 & 0 & 0 & 0 & -3 & 1 & 0 & 1 & 0 & 0 & 0 & 0 & 0 & 0 & 0 & 0 & 0 & 0 &
0 & 0 & 0 & 0 & 0 & 1\\
0 & 0 & 0 & 0 & 1 & -3 & 1 & 0 & 0 & 0 & 0 & 0 & 0 & 1 & 0 & 0 & 0 & 0 &
0 & 0 & 0 & 0 & 0 & 0\\
0 & 0 & 0 & 0 & 0 & 1 & -3 & 1 & 0 & 0 & 0 & 0 & 0 & 0 & 0 & 0 & 0 & 1 &
0 & 0 & 0 & 0 & 0 & 0\\
0 & 0 & 0 & 0 & 1 & 0 & 1 & -3 & 0 & 0 & 0 & 1 & 0 & 0 & 0 & 0 & 0 & 0 &
0 & 0 & 0 & 0 & 0 & 0\\
0 & 0 & 0 & 0 & 0 & 0 & 0 & 0 & -3 & 1 & 0 & 1 & 0 & 0 & 0 & 0 & 1 & 0 &
0 & 0 & 0 & 0 & 0 & 0\\
0 & 1 & 0 & 0 & 0 & 0 & 0 & 0 & 1 & -3 & 1 & 0 & 0 & 0 & 0 & 0 & 0 & 0 &
0 & 0 & 0 & 0 & 0 & 0\\
0 & 0 & 0 & 0 & 0 & 0 & 0 & 0 & 0 & 1 & -3 & 1 & 0 & 0 & 0 & 0 & 0 & 0 &
0 & 0 & 0 & 0 & 1 & 0\\
0 & 0 & 0 & 0 & 0 & 0 & 0 & 1 & 1 & 0 & 1 & -3 & 0 & 0 & 0 & 0 & 0 & 0 &
0 & 0 & 0 & 0 & 0 & 0\\
0 & 0 & 0 & 0 & 0 & 0 & 0 & 0 & 0 & 0 & 0 & 0 & -3 & 1 & 0 & 1 & 0 & 0 &
0 & 0 & 1 & 0 & 0 & 0\\
0 & 0 & 0 & 0 & 0 & 1 & 0 & 0 & 0 & 0 & 0 & 0 & 1 & -3 & 1 & 0 & 0 & 0 &
0 & 0 & 0 & 0 & 0 & 0\\
0 & 0 & 0 & 0 & 0 & 0 & 0 & 0 & 0 & 0 & 0 & 0 & 0 & 1 & -3 & 1 & 0 & 0 &
1 & 0 & 0 & 0 & 0 & 0\\
0 & 0 & 0 & 1 & 0 & 0 & 0 & 0 & 0 & 0 & 0 & 0 & 1 & 0 & 1 & -3 & 0 & 0 &
0 & 0 & 0 & 0 & 0 & 0\\
0 & 0 & 0 & 0 & 0 & 0 & 0 & 0 & 1 & 0 & 0 & 0 & 0 & 0 & 0 & 0 & -3 & 1 &
0 & 1 & 0 & 0 & 0 & 0\\
0 & 0 & 0 & 0 & 0 & 0 & 1 & 0 & 0 & 0 & 0 & 0 & 0 & 0 & 0 & 0 & 1 & -3 &
1 & 0 & 0 & 0 & 0 & 0\\
0 & 0 & 0 & 0 & 0 & 0 & 0 & 0 & 0 & 0 & 0 & 0 & 0 & 0 & 1 & 0 & 0 & 1 &
-3 & 1 & 0 & 0 & 0 & 0\\
1 & 0 & 0 & 0 & 0 & 0 & 0 & 0 & 0 & 0 & 0 & 0 & 0 & 0 & 0 & 0 & 1 & 0 &
1 & -3 & 0 & 0 & 0 & 0\\
0 & 0 & 0 & 0 & 0 & 0 & 0 & 0 & 0 & 0 & 0 & 0 & 1 & 0 & 0 & 0 & 0 & 0 &
0 & 0 & -3 & 1 & 0 & 1\\
0 & 0 & 1 & 0 & 0 & 0 & 0 & 0 & 0 & 0 & 0 & 0 & 0 & 0 & 0 & 0 & 0 & 0 &
0 & 0 & 1 & -3 & 1 & 0\\
0 & 0 & 0 & 0 & 0 & 0 & 0 & 0 & 0 & 0 & 1 & 0 & 0 & 0 & 0 & 0 & 0 & 0 &
0 & 0 & 0 & 1 & -3 & 1\\
0 & 0 & 0 & 0 & 1 & 0 & 0 & 0 & 0 & 0 & 0 & 0 & 0 & 0 & 0 & 0 & 0 & 0 &
0 & 0 & 1 & 0 & 1 & -3
\end{array}\right]
$} The matrix $\triangle_{G}$ has the following eigenvalues $\mu_{j}$ and
eigenvectors $v^{j,i}\in\mathcal{V}_{j}$ such that $\mathcal{V}_{j}
=$span\thinspace$\{v_{j,i}:i=1,...,\dim\mathcal{V}_{j}\}$:

\begin{itemize}
\item $\mu_{0}:=0$ with eigenspace $E(\mu_{0})$ equivalent to $\mathcal{V}
_{0}^{-}$ spanned by the vector:\newline\noi\scalebox{.78}{
$\begin{array}{cl}
v_{10}&=(1,1,1,1,1,1,1,1,1,1,1,1,1,1,1,1,1,1,1,1,1,1,1,1)^T
\end{array}$}

\item $\mu_{1}:=\sqrt{2}-2$ with eigenspace $E(\mu_{1})$ equivalent to
$\mathcal{V}_{3}^{-}$ spanned by the vectors:\newline
\noi\scalebox{.78}{$\begin{array}{cl} v_{1,1}&=[0,1,0,-1,0,-1,0,1,1+\sqrt 2  ,1+\sqrt 2 ,1+\sqrt 2,1+\sqrt 2,-1-\sqrt{2},-1-\sqrt 2 ,-1-\sqrt{2},-1-\sqrt 2 ,1,0,-1,0,-1,0,1,0]^T\\
v_{1,2}&=[1,\frac{\sqrt 2}2,-1-\frac {\sqrt 2}2 ,\sqrt{2},-1,-\frac{\sqrt 2}2,-\sqrt{2},1+\frac {\sqrt 2}2, -\sqrt 2 ,-\frac{\sqrt 2}2,-1,1+\frac {\sqrt 2}2 ,\sqrt{2},\frac{\sqrt 2}2,1,\frac{\sqrt 2}2,-1,-1,0,0,1,1,0,0]^T\\
v_{1,3}&=[-\sqrt 2,1+\frac {\sqrt 2}2,-1,-\frac{\sqrt 2}2,\sqrt 2 ,-1-\frac {\sqrt 2}2 ,1,\frac{\sqrt 2}2,-\sqrt 2,1+\frac {\sqrt 2}2 ,-1,-\frac{\sqrt 2}2,\sqrt{2},-1-\frac {\sqrt 2}2,1,-1-\frac {\sqrt 2}2 ,
-1,0,0,-1,1,0,0,1]^T\end{array}$
} \newline

\item $\mu_{2}:=\sqrt{3}-3$ with eigenspace $E(\mu_{2})$ equivalent to
$\mathcal{V}_{2}^{-}$ spanned by the vectors:\newline\noi\scalebox{.78}{
$\begin{array}{cl}
v_{7,1}&=-2,-\sqrt{3},-2,-\sqrt{3},-2,-\sqrt{3},-2,-\sqrt{3},\sqrt{3},1,\sqrt{3},1,\sqrt{3},1,\sqrt{3},1,1,0,1,0,1,0,1,0)^T\\
v_{7,2}&=(\sqrt{3},1,\sqrt{3},1,\sqrt{3},1,\sqrt{3},1,-2,-\sqrt{3},-2,-\sqrt{3},-2,-\sqrt{3},-2,-\sqrt{3},0,1,0,1,0,1,0,1)^T
\end{array}$}

\item $\mu_{3}=-2$ with eigenspace $E(\mu_{3})$ equivalent to $\mathcal{V}
_{4}^{-}$ spanned by the vectors:\newline
\noi\scalebox{.78}{$\begin{array}{cl}
v_{5,1}&=(0,-1,0,1,0,-1,0,1,0,-1,0,1,0,-1,0,1,0,0,0,0,0,0,0,0)^T\\
v_{5,2}&=(1,0,-1,0,1,0,-1,0,0,0,0,0,0,0,0,0,0,-1,0,1,0,-1,0,1)^T\\
v_{5,3}&=(0,0,0,0,0,0,0,0,-1,0,1,0,-1,0,1,0,-1,0,1,0,-1,0,1,0)^T
\end{array}$}

\item $\mu_{4}=\sqrt{2}-4$ with eigenspace $E(\mu_{4})$ equivalent to
$\mathcal{V}_{4}^{-}$ spanned by the vectors:\newline
\noi\scalebox{.78}{$\begin{array}{cl}
v_{4,1}&=(-1,-\frac {\sqrt 2}2,\sqrt{2},\frac {2-\sqrt 2}2,1,\frac {\sqrt 2}2,-\sqrt{2},-\frac {2-\sqrt 2}2,\sqrt 2,-\frac {\sqrt 2}2,-1,\frac {2-\sqrt 2}2,-\sqrt{2},\frac {\sqrt 2}2,1,-\frac {2-\sqrt 2}2 ,1,-1,0,0,-1,1,0,0)^T\\
v_{4,2}&=(0,-1,0,1,0,1,0,-1,-1+\sqrt 2 ,1-\sqrt 2,1+\sqrt 2,1-\sqrt 2,1-\sqrt{2},-1+\sqrt 2,1-\sqrt{2},-1+\sqrt 2,1,0,-1,0,-1,0,1,0)^T\\
v_{4,3}&=(
-\sqrt 2,-\frac {2-\sqrt 2}2 ,1,\frac {\sqrt 2}2,\sqrt 2,
\frac {2-\sqrt 2}2,-1,-\frac {\sqrt 2}2,\sqrt 2,\frac {2-\sqrt 2}2,-1,-\frac {\sqrt 2}2,-\sqrt{2},-\frac {2-\sqrt 2}2,1,\frac {\sqrt 2}2,1,0,0,-1,-1,0,0,1)^T.\end{array}$}

\item $\mu_{5}=-\sqrt{2}-2,$ with eigenspace $E(\mu_{5})$ equivalent to
$\mathcal{V}_{3}^{-}$ spanned by the vectors:\newline
\noi\scalebox{.78}{$\begin{array}{cl}
v_{2,1}=&[1,-\frac {\sqrt 2}2,-\sqrt{2},\frac {2-\sqrt 2}2 ,-1,\frac {\sqrt 2}2,\sqrt{2},-\frac {2-\sqrt 2}2,\sqrt 2,\frac {\sqrt 2}2,-1,-\frac {2-\sqrt 2}2,-\sqrt{2},-\frac {\sqrt 2}2,1,\frac {2-\sqrt 2}2,-1,-1,0,0,1,1,0,0]^T\\
v_{2,2}=&[0,1,0,-1,0,-1,0,1,1-\sqrt 2,1-\sqrt 2,1-\sqrt 2,1-\sqrt 2,\sqrt{2}-1,\sqrt 2-1,\sqrt{2}-1,\sqrt 2-1,1,0,-1,0,-1,0,1,0]^T\\
v_{2,3}=&[\sqrt 2,-\frac {2-\sqrt 2}2,-1,\frac {\sqrt 2}2,-\sqrt 2,\frac {2-\sqrt 2}2 ,1,-\frac {\sqrt 2}2,\sqrt 2,-\frac {2-\sqrt 2}2 ,-1,\frac {\sqrt 2}2,-\sqrt{2},
\frac {2-\sqrt 2}2,1,-\frac {\sqrt 2}2,-1,0,0,-1,1,0,0,1]^T
\end{array}
$}\newline

\item $\mu_{6}=-4$ with eigenspace $E(\mu_{6})$ equivalent to $\mathcal{V}
_{3}^{-}$ spanned by the vectors:\newline\noi\scalebox{.78}{
$\begin{array}{cl}
v_{6,1}&=(0,1,0,-1,0,1,0,-1,0,-1,0,1,0,-1,0,1,0,0,0,0,0,0,0,0)^T\\
v_{6,2}&=(-1,0,1,0,-1,0,1,0,0,0,0,0,0,0,0,0,0,-1,0,1,0,-1,0,1)^T\\
v_{6,3}&=(0,0,0,0,0,0,0,0,1,0,-1,0,1,0,-1,0,-1,0,1,0,-1,0,1,0)^T
\end{array}$}

\item $\mu_{7}:=-\sqrt{3}-3$ with eigenspace $E(\mu_{7})$ equivalent to
$\mathcal{V}_{2}^{-}$ spanned by the vectors:\newline\noi\scalebox{.78}{
$\begin{array}{cl}
v_{8,1}&=(-2,\sqrt{3},-2,\sqrt{3},-2,\sqrt{3},-2,\sqrt{3},-\sqrt{3},1,-\sqrt{3},1,-\sqrt{3},1,-\sqrt{3},1,1,0,1,0,1,0,1,0)^T\\
v_{8,2}&=(-\sqrt{3},1,-\sqrt{3},1,-\sqrt{3},1,-\sqrt{3},1,-2,\sqrt{3},-2,\sqrt{3},-2,\sqrt{3},-2,\sqrt{3},0,1,0,1,0,1,0,1)^T
\end{array}$}

\item $\mu_{8}=-\sqrt{2}-4,$ with eigenspace $E(\mu_{8})$ equivalent to
$\mathcal{V}_{4}^{-}$ spanned by the vectors:\newline
\noi\scalebox{.78}{$\begin{array}{cl}
v_{3,1}=&[\sqrt 2,1+\frac {\sqrt 2}2,1,-\frac{\sqrt 2}2,-\sqrt 2,-1-\frac {\sqrt 2}2,-1,\frac{\sqrt 2}2,-\sqrt 2,-1-\frac {\sqrt 2}2, -1,\frac{\sqrt 2}2,\sqrt{2},1+\frac {\sqrt 2}2,1,-\frac{\sqrt 2}2,1,0,0,-1,-1,0,0,1]^T\\
v_{3,2}=&[-1,\frac{\sqrt 2}2,-\sqrt{2},-1-\frac {\sqrt 2}2,1,-\frac{\sqrt 2}2,\sqrt{2},1+\frac {\sqrt 2}2,-\sqrt 2,\frac{\sqrt 2}2,-1,-1-\frac {\sqrt 2}2),\sqrt{2},-\frac{\sqrt 2}2,1,1+\frac {\sqrt 2}2,1,-1,0,0,-1,1,0,0]^T\\
v_{3,3}=&[0,-1,0,1,0,1,0,-1,-1-\sqrt 2 ,1+\sqrt 2,-1-\sqrt 2,1+\sqrt 2,1+\sqrt{2},-1-\sqrt 2,1+\sqrt{2},-1-\sqrt 2,1,0,-1,0,-1,0,1,0]^T
\end{array}
$}

\item $\mu_{9}:=-6$ with eigenspace $E(\mu_{9})$ equivalent to $\mathcal{V}
_{1}^{-}$ spanned by the vector:\newline\noi\scalebox{.78}{
$\begin{array}{cl}
v_9&=(-1,1,-1,1,-1,1,-1,1,1,-1,1,-1,1,-1,1,-1,-1,1,-1,1,-1,1,-1,1)^T
\end{array}$}
\end{itemize}

In summary, we have the eigenvalues $\mu_{9}<...<\mu_{0}$ of $\triangle_{G}$,
each of them with single isotypical multiplicity (which means that the
eigenspace of the eigenvalue $\mu_{j}$ is equivalent to $\mathcal{V}_{n_{j}
}^{-}$). The values $k/\omega_{j}$ for $k\in\mathbb{N}$ and $j=1,...,9$ (with
$\mu_{j}=-\omega_{j}^{2}$) have the order
\[
\frac{1}{\omega_{9}}<....<\frac{1}{\omega_{3}}<\frac{2}{\omega_{9}}<\frac
{2}{\omega_{8}}<\frac{1}{\omega_{2}}<\frac{2}{\omega_{7}}<\frac{2}{\omega_{6}
}<\frac{2}{\omega_{5}}<\frac{1}{\omega_{1}}<\dots
\]

We apply the reduction to the fixed-point space of $H:=D_{1}^{z}$. In this
case $W(H)=S_{4}\times\bz_{2}$ and we have the following basic $S_{4}
\times\bz_{2}$-degrees:
\begin{align*}
\deg_{\cV_{0}^{-}}=\;  &  (S_{4}\times\bz_{2})-(S_{4}),\\
\deg_{\cV_{1}^{-}}=\;  &  (S_{4}\times\bz_{2})-(S_{4}^{-}),\\
\deg_{\cV_{2}^{-}}=\;  &  (S_{4}\times\bz_{2})-(D_{4}^{\hat{d}})-(D_{4}
)+(V_{4}),\\
\deg_{\cV_{3}^{-}}=\;  &  (S_{4}\times\bz_{2})-(D_{4}^{z})-(D_{3}^{z}
)-(D_{2}^{d})+2(D_{1}^{z})+(\bz_{2}^{-})-(\bz_{1}).\\
\deg_{\cV_{4}^{-}}=\;  &  (S_{4}\times\bz_{2})-(D_{4}^{d})-(D_{3})-(D_{2}
^{d})+(\bz_{2}^{-})+2(D_{1})-(\bz_{1}).
\end{align*}
By applying Theorem \ref{thm:exist2} and Corollary \ref{cor:prod}, we obtain the following result: \vs

\begin{theorem}
\label{th:octahedron} Consider the network consisting of a configuration of 20
elements arranged as the vertices of the cut-off octahedron and assume that
$g^{\prime}(0)=0$ and $\left\vert g(u)\right\vert >\left\vert u\right\vert $
for $\left\vert u\right\vert >M$. Suppose $\lambda>\frac{1}{\omega_{9}}$
such that for all $k\in2\mathbb{N}-1$ we have $\lambda\not =\frac{k}
{\omega_{j}}$, $j=1,...,9$. Then the system \eqref{eq:system1} admits a
$p$-periodic solution $u(t)$ satisfies $-u(-t)=u(t)$. Moreover,

\begin{itemize}
\item[(a)] If $\frac{1}{\omega_{9}}<\lambda<\frac{1}{\omega_{8}}$  then there exists a $p$ periodic
solution $u(t)$ such that the solution $x(t)=u(\lambda t)$ to
\eqref{eq:system2} has the $S_{4}\times\bz_{2}$-isotropy group $(S_{4}^{-})$.

\item[(b)] If $\frac{1}{\omega_{8}}<\lambda<\frac{1}{\omega_{7}}$ or $\frac
{1}{\omega_{2}}<\lambda<\frac{2}{\omega_{7}}$  then there exist three $p$ periodic solutions such that the
solutions $x(t)=u(\lambda t)$ to \eqref{eq:system2} have the $S_{4}
\times\bz_{2}$-isotropy groups $(S_{4}^{-})$, $(D_{4}^{{d}})$ and $(D_2^d)$.

\item[(c)] If $\frac{1}{\omega_{7}}<\lambda<\frac{1}{\omega_{6}}$, $\frac{1}{\omega_{5}}<\lambda<\frac{1}{\omega_{4}}$,  $\frac{1}{\omega_{3}}<\lambda<\frac{2}{\omega_{9}}$, $\frac{2}{\omega_{8}}<\lambda<\frac{1}{\omega_{2}}$, $\frac{2}{\omega_{7}}<\lambda<\frac{2}{\omega_{6}}$ and   $\frac{2}{\omega_{5}}<\lambda<\frac{1}{\omega_{1}}$, then there
exist four $p$ periodic solutions $u(t)$ such that the solutions
$x(t)=u(\lambda t)$ to \eqref{eq:system2} have the $S_{4}\times\bz_{2}
$-isotropy groups $(S_{4}^{-})$, $(D_{4}^{\hat{d}})$, $(D_2^d)$ and $D_{4}^{d})$.

\item[(d)] If $\frac{1}{\omega_{6}}<\lambda<\frac{1}{\omega_{5}}$, $\frac{2}{\omega_{8}}<\lambda<\frac{2}{\omega_{7}}$ and $\frac{2}{\omega_{6}}<\lambda<\frac{2}{\omega_{5}}$,   then there
exist three $p$ periodic solutions such that the solutions $x(t)=u(\lambda t)$
to \eqref{eq:system2} have the $S_{4}\times\bz_{2}$-isotropy groups $(S_{4}
^{-})$, $(D_4^d)$ and $(D_{4}^{\hat{d}})$.

\item[(e)] If $\frac{1}{\omega_{4}}<\lambda<\frac{1}{\omega_{3}}$  then there
exist three $p$ periodic solutions such that the solutions $x(t)=u(\lambda t)$
to \eqref{eq:system2} have the $S_{4}\times\bz_{2}$-isotropy groups $(S_{4}
^{-})$, $(D_2^d)$ and $(D_{4}^{\hat{d}})$.

\item[(f)] If $\frac{2}{\omega_{9}}<\lambda<\frac{2}{\omega_{8}}$  then there
exist three $p$ periodic solutions such that the solutions $x(t)=u(\lambda t)$
to \eqref{eq:system2} have the $S_{4}\times\bz_{2}$-isotropy groups $(D_{4}
^{d})$, $(D_2^d)$ and $(D_{4}^{\hat{d}})$.

\end{itemize}
\end{theorem}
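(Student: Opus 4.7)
The plan is to apply Theorem \ref{thm:exist2} together with Corollary \ref{cor:prod} in the setting $\Gamma=S_4$, after verifying the hypotheses and then computing the invariant $\omega^H$ case-by-case on each subinterval of $\lambda$. To start, I would verify conditions (A1)--(A3) for $f(u)=\Delta_G u+g(u)$: equivariance follows from the fact that the matrix $\Delta_G$ commutes with $S_4$ acting by vertex permutations on the truncated octahedron, oddness follows from $g$ being odd, and the Nagumo condition (A3) is implied by $|g(u)|>|u|$ outside a ball together with $g'(0)=0$ (so that the linear part $\Delta_G u$ contributes a term of order $|u|$ which is eventually dominated by the inner product $g(u)\bullet u$).

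Next, I would choose the fixed-point reduction subgroup $H:=D_1^z$, which forces the solutions in $\mathscr H^H$ to be odd $2\pi$-periodic functions; this eliminates the constant functions (and thus the Goldstone mode corresponding to $\mu_0=0$) from the kernel of $\mathscr A^H$. Under the assumption $\lambda\neq k/\omega_j$ for $k\in 2\bn-1$ and $j=1,\dots,9$, we get $0\notin\sigma(\mathscr A^H)$, so the invariant $\omega^H$ is well-defined. By formula \eqref{eq:basic-product}, it equals $(S_4\times\bz_2)-\prod_{k/\omega_j<\lambda}\deg_{\cV_{n_j,k}^-}$, where $n_j\in\{1,2,3,4\}$ is read off from the list of eigenspaces $E(\mu_j)$ computed in the text.

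For each subinterval specified in parts (a)--(f), I would enumerate which pairs $(j,k)$ satisfy $k/\omega_j<\lambda$, using the explicit ordering of the fractions $k/\omega_j$ given in the preceding paragraph. The identity $\deg_{\cV_{n_j,k}^-}^2=(S_4\times\bz_2)$ (guaranteed by the remark on invertibility in the Burnside ring) allows cancellation of repeated basic degrees, reducing the product to one in which each basic degree appears at most once. Then Corollary \ref{cor:prod} identifies, among the maximal conjugacy classes $(H)$ in $\bigcup_i\Phi(\deg_{\cV_i})\setminus\{(G)\}$ that occur in only one factor, those with nonzero coefficient in $\omega^H-(G)$. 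Using the explicit expansions of $\deg_{\cV_0^-},\dots,\deg_{\cV_4^-}$ computed in the text, one reads off the isotropy classes $(S_4^-)$, $(D_4^{\hat d})$, $(D_4^d)$, $(D_2^d)$ stated in the theorem for each interval. Finally, for each such class $(H_j)$, Theorem \ref{thm:exist2} produces a $2\pi$-periodic solution $x(t)$ with $G_x\geq H_j$, and the substitution $u(t)=x(t/\lambda)$ yields the required $p$-periodic solution of \eqref{eq:system1}.

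The principal obstacle is the bookkeeping in step three: in each of the six ranges (a)--(f), one must track precisely which pairs $(j,k)$ have crossed the threshold $k/\omega_j=\lambda$, reduce the resulting product modulo squares, and then carefully compare with the five basic-degree expansions to isolate the maximal orbit types that appear with nonzero coefficient. This is a purely mechanical but substantial symbolic calculation, and is exactly the kind of computation handled by the G.A.P.~package of \cite{Pin}; the correctness of the listed orbit-type data in parts (a)--(f) reduces entirely to executing this computation and matching against Corollary \ref{cor:prod}.
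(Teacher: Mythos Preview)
Your proposal is correct and follows essentially the same route as the paper: the paper's proof consists solely of the sentence ``By applying Theorem \ref{thm:exist2} and Corollary \ref{cor:prod}, we obtain the following result,'' with the surrounding text supplying the choice $H=D_1^z$, the identification $W(H)=S_4\times\bz_2$, the list of basic degrees $\deg_{\cV_0^-},\dots,\deg_{\cV_4^-}$, and the ordering of the thresholds $k/\omega_j$---exactly the ingredients you assemble. One small point to watch: for $H=D_1^z$ the fixed-point space $\mathscr H^{D_1^z}$ contains $\sin(kt)u$ for \emph{all} $k\ge 1$ (not only odd $k$), which is why the paper's threshold ordering and the interval endpoints in parts (b)--(f) involve $2/\omega_j$; your claim that $0\notin\sigma(\mathscr A^H)$ therefore requires $\lambda\neq k/\omega_j$ for all $k\in\bn$, which is implicit in the interval specifications even though the hypothesis as written mentions only $k\in 2\bn-1$.
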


\vskip1cm
\appendix
\section{Appendix: Equivariant Brouwer Degree}
\label{sec:pre}

W assume $G$ is a compact Lie group. For a subgroup
$H\subset G$ (which is always assumed to be closed), denote by $N(H)$ the
normalizer of $H$ in $G$, by $W(H)=N(H)/H$ the Weyl group of $H$ in $G$, and
by $(H)$ the conjugacy class of $H$ in $G$. The set $\Phi(G)$ of all conjugacy
classes in $G$ admits a partial order defined as follows: $(H)\leq(K)$ if and
only if $gHg^{-1}\subset K$ for some $g\in G$. We will also put $\Phi
_{k}(G):=\left\{  (H)\in\Phi(G):\dim{W(H)}=k\right\}  $. \vskip.3cm

For a $G$-space $X$ and $x\in X$, $G_{x}:=\left\{  g\in G:gx=x\right\}  $ is
called the \emph{isotropy} of $x$ and $G(x):=\left\{  gx:g\in G\right\}  $ is
called the \emph{orbit} of $x$. One can easily verify that $G(x)\simeq
G/G_{x}$. Denote by $X/G$ the the set of all orbits in $X$ under the action of
$G$, which is called the \emph{orbit space} of $X$. Furthermore, we call
conjugacy class $(G_{x})$ the \emph{orbit type} of $x$ in $X$ and put
$\Phi(G;X):=\left\{  (H)\in\Phi(G):H=G_{x}\mbox{ for some }x\in X\right\}  $.
Also, for a $G$-space $X$ and a closed subgroup $H$ of $G$, $X^{H}:=\left\{
x\in X:G_{x}\supset H\right\}  $ is called $H$-fixed-point subspace of $X$. \vskip.3cm

Let $X$ and $Y$ be two $G$-spaces. A continuous map $f:X\to Y$ is said to be
\emph{equivariant} if $f(gx) = gf(x)$ for all $x \in X$ and $g \in G$. If the
$G$-action on $Y$ is trivial, i.e., $f(gx)=f(x)$ for all $x\in X$ and $g\in
G$, then $f$ is called \textit{invariant}. As is known (see, for instance,
\cite{tD} and \cite{AED}), for any subgroup $H<G$ and equivariant map $f:X\to
Y$, the map $f^{H}:X^{H}\to Y^{H}$, where $f^{H}:=f|_{X^{H}}$, is well-defined
and $W(H)$-equivariant. \vskip.3cm

Let $L\leq H\leq G$. We put $N(L,H):=\left\{  g\in G:gLg^{-1}\leq H\right\}
$. Then, if $(L)$, $(H)\in\Phi_{0}(G)$, the number
\begin{align*}
n(L,H)=\left|  N(L,H)/N(H)\right|  ,
\end{align*}
where $\left|  X\right|  $ stands for the cardinality of $X$, is well defined
and finite (see, for example, \cite{GolSchSt}). For more information about the
number $n(L,H)$ and its properties, we refer to \cite{AED}. \vskip.3cm

\subsection{Isotypical Decomposition of Finite-Dimensional Representations}

\label{subsec:G-represent}

As is well-known, any compact group admits only countably many non-equivalent
real irreducible representations. Therefore, given a compact
Lie group $\Gamma$, we always assume that a complete list of all real
 irreducible $\Gamma$-representations, denoted by
$\mathcal{V}_{i}$, $i=0,1,\dots$  is
available. Let $V$ (resp.~$U$) be a finite-dimensional real 
$\Gamma$-representation. Without loss of generality, $V$ can be
assumed orthogonal. Then, one can represent $V$  as
the direct sum
\begin{align}
\label{eq:Giso}V  &  =V_{0}\oplus V_{1}\oplus\dots\oplus V_{r},
\end{align}
which is called the \emph{$\Gamma$-isotypical decomposition of $V$}, where the isotypical component $V_{i}$ is
\emph{modeled} on the irreducible $\Gamma$-representation $\mathcal{V}_{i}$,
$i=0,1,\dots,r$, i.e., $V_{i}$
 contains all the irreducible subrepresentations of $V$
 which are equivalent to $\mathcal{V}_{i}$. Notice that for a $\Gamma$-equivariant linear map $A:V\to V$,
$A(V_{i})\subset V_{i}$, $i=0,1,2,\dots,r$. We will denote by $\sigma(A)$ the
spectrum of $A$ and for $\mu\in\sigma(A)$, $E(\mu)$ will stand for the
generalized eigenspace corresponding to $\mu$. Clearly, $E(\mu)$ is $\Gamma
$-invariant. Then we can put
\begin{equation}
\label{eq:iso-mult}m_{i}(\mu):=\dim\left(  V_{i}\cap E(\mu)\right)
/\dim\mathcal{V}_{i},
\end{equation}
and will call the number $m_{i}(\mu)$ the $\mathcal{V}_{i}$-multiplicity of
the eigenvalue $\mu$. \vskip.3cm

Given an orthogonal $\Gamma$-representation $V$, denote by $\text{GL}^{\Gamma
}(V)$ group of all $\Gamma$-equivariant linear invertible operators on $V$.
Then, the isotypical decomposition \eqref{eq:Giso} induces a decomposition of
$\gl^{\Gamma}(V)$:
\begin{equation}
\label{eq:GLG-decomp}\gl^{\Gamma}(V)=\bigoplus_{i=0}^{r}\gl^{\Gamma}(V_{i}).
\end{equation}
For every isotypical component $V_{i}$ in \eqref{eq:Giso}, one has
$\gl^{\Gamma}(V_{i})\simeq\gl(m_{i},\mathbb{F})$, where $m_{i}=\dim V_{i}
/\dim\mathcal{V} _{i}$ and $\mathbb{F} $ is a finite-dimensional division
algebra, i.e., $\mathbb{F} =\mathbb{R} $, $\mathbb{C} $ or $\mathbb{H} $,
depending on the type of the irreducible representation $\mathcal{V} _{i}$. \vskip.3cm

\subsection{Burnside Ring $A(G)$}

Let $G$ be a compact Lie group. Denote by $A(G):={\mathbb{Z}}[\Phi_{0}(G)]$
the free abelian group generated by $(H)\in\Phi_{0} (G)$, i.e., an element
$a\in A(G)$ is a finite sum
\begin{align*}
a=n_{1}(H_{1})+\dots+n_{m}(H_{m}),
\end{align*}
where $n_{i}\in\mathbb{Z} $ and $(H_{i})\in\Phi_{0}(G)$. In addition, one can
define an operation of \emph{multiplication} in $A(G)$ by
\begin{equation}
\label{eq:Burnside-mul}(H)\cdot(K)=\sum_{(L)\in\Phi_{0}(G)}n_{L}\,(L),
\end{equation}
where the integer $n_{L}$ represents the number of orbits of type $(L)$
contained in the space $G/H\times G/K$. In this way, $A(G)$ becomes a ring
with the unity $(G)$. The ring $A(G)$ is called the \emph{Burnside ring} of
$G$. By using the partial order on $\Phi_{0}(G)$, the multiplication table for
$A(G)$ can be effectively computed using a simple recurrence formula:
\begin{equation}
\label{eq:Burnside-multip}n_{L}=\frac{n(L,H)\left|  W(H)\right| n
(L,K)\left|  W(K)\right|  - \sum_{(\widetilde L)>(L)}{n(L,\widetilde L)
n_{\widetilde L}\left|  W(\widetilde L)\right|  }}{\left|  W(L)\right|  }.
\end{equation}
\vskip.3cm

\subsection{$G$-Equivariant Degree: Basic Properties and Recurrence Formula}

\label{subsec:G-degree}

Suppose that $V$ is an orthogonal $G$-representation and $f:V\to V$ a
continuous $G$-equivariant map. Consider an open bounded $G$-invariant set
$\Omega$. Then the $G$-map $f$ is called \emph{$\Omega$-admissible} if for all
$x\in\partial\Omega$, we have $f(x)\neq0$. In such a case, the pair
$(f,\Omega)$ is called a \emph{$G$-admissible pair} (in $V$). The set of all
possible $G$-pairs will be denoted by $\mathcal{M}^{G}$. \vskip.3cm

This subsection is to provide a practical ``definition'' of the $G$
-equivariant degree based on its properties that can be used as set of axioms,
which uniquely determines this $G$-degree (see \cite{AED} for all the
details): \vskip.3cm

\begin{theorem}
\label{thm:GpropDeg} There exists a unique map $G\mbox{\rm -}\deg:\mathcal{M}
^{G}\to A(G)$, which assigns to every admissible $G$-pair $(f,\Omega)$ an
element $G\mbox{\rm -}\deg(f,\Omega)\in A(G)$, called the \emph{$G$
-equivariant degree (or simply $G$-degree)} of $f$ on $\Omega$,
\begin{equation}
\label{eq:G-deg0}G\mbox{\rm -}\deg(f,\Omega)=\sum_{(H)\in\Phi_{0}(G)}
{n_{H}(H)}= n_{H_{1}}(H_{1})+\dots+n_{H_{m}}(H_{m}),
\end{equation}
satisfying the following properties:

\begin{itemize}
\item[\textrm{(G1)}] \textbf{(Existence)} If $G\mbox{\rm -}\deg(f,\Omega)\ne
0$, i.e., $n_{H_{i}}\neq0$ for some $i$ in \eqref{eq:G-deg0}, then there
exists $x\in\Omega$ such that $f(x)=0$ and $(G_{x})\geq(H_{i})$.

\item[\textrm{(G2)}] \textbf{(Additivity)} Let $\Omega_{1}$ and $\Omega_{2}$
be two disjoint open $G$-invariant subsets of $\Omega$ such that
$f^{-1}(0)\cap\Omega\subset\Omega_{1}\cup\Omega_{2}$. Then,
\begin{align*}
G\mbox{\rm -}\deg(f,\Omega)=G\mbox{\rm -}\deg(f,\Omega_{1})+G\mbox{\rm -}\deg
(f,\Omega_{2}).
\end{align*}

\item[\textrm{(G3)}] \textbf{(Homotopy)} If $h:[0,1]\times V\to V$ is an
$\Omega$-admissible $G$-homotopy, then
\begin{align*}
G\mbox{\rm -}\deg(h_{t},\Omega)=\mathrm{constant}.
\end{align*}

\item[\textrm{(G4)}] \textbf{(Normalization)} Let $\Omega$ be a $G$-invariant
open bounded neighborhood of $0$ in $V$. Then,
\begin{align*}
G\mbox{\rm -}\deg(\id,\Omega)=1\cdot(G).
\end{align*}

\item[\textrm{(G5)}] \textbf{(Multiplicativity)} For any $(f_{1},\Omega
_{1}),(f_{2},\Omega_{2})\in\mathcal{M} ^{G}$,
\begin{align*}
G\mbox{\rm -}\deg(f_{1}\times f_{2},\Omega_{1}\times\Omega_{2})=
G\mbox{\rm -}\deg(f_{1},\Omega_{1})\cdot G\mbox{\rm -}\deg(f_{2},\Omega_{2}),
\end{align*}
where the multiplication `$\cdot$' is taken in the Burnside ring $A(G )$.

\item[\textrm{(G6)}] \textbf{(Suspension)} If $W$ is an orthogonal
$G$-representation and $\mathscr B$ is an open bounded invariant neighborhood
of $0\in W$, then
\begin{align*}
G\mbox{\rm -}\deg(f\times\id_{W},\Omega\times\mathscr B)=G\mbox{\rm -}\deg
(f,\Omega).
\end{align*}

\item[\textrm{(G7)}] \textbf{(Recurrence Formula)} For an admissible $G$-pair
$(f,\Omega)$, the $G$-degree \eqref{eq:G-deg0} can be computed using the
following recurrence formula
\begin{equation}
\label{eq:RF-0}n_{H}=\frac{\deg(f^{H},\Omega^{H})- \sum_{(K)>(H)}{n_{K}\,
n(H,K)\, \left|  W(K)\right|  }}{\left|  W(H)\right|  },
\end{equation}
where $\left|  X\right|  $ stands for the number of elements in the set $X$
and $\deg(f^{H},\Omega^{H})$ is the Brouwer degree of the map $f^{H}
:=f|_{V^{H}}$ on the set $\Omega^{H}\subset V^{H}$.
\end{itemize}
\end{theorem}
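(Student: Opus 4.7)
The plan is to apply Theorem \ref{thm:exist2} with the fixed-point reduction subgroup $H:=D_1^z$, whose fixed-point subspace consists of the odd $2\pi$-periodic functions (so any solution in $\mathscr H^H$ automatically satisfies $-u(-t)=u(t)$), and then reduce the computation of the associated invariant $\omega^H$ to a product of basic $S_4\times\bz_2$-degrees using the explicit eigendata of $\triangle_G$ listed above. Since the hypothesis $\lambda\neq k/\omega_j$ guarantees $0\notin\sigma(\mathscr A^H)$ (equivalently, the spectral ratio $\xi_{j,k}=(k^2+\lambda^2\mu_j)/(k^2+1)$ never vanishes in the admissible range), Theorem \ref{thm:exist2} is applicable and yields
\[
\omega^H \;=\; (G)\;-\;\prod_{k/\omega_j<\lambda}\deg_{\cV^-_{n_j,k}},
\]
where each eigenvalue $\mu_j$ is paired with its isotypical label $n_j\in\{0,1,2,3,4\}$ read off from the list of eigenvectors $v_{1,i},\dots,v_9$ (namely $\mu_0\!\leftrightarrow\!\cV_0^-$, $\mu_1,\mu_5,\mu_6\!\leftrightarrow\!\cV_3^-$, $\mu_2,\mu_7\!\leftrightarrow\!\cV_2^-$, $\mu_3,\mu_4,\mu_8\!\leftrightarrow\!\cV_4^-$, $\mu_9\!\leftrightarrow\!\cV_1^-$).

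Next, for each of the six disjoint $\lambda$-intervals listed in (a)--(f), I would enumerate which pairs $(j,k)$ satisfy $k/\omega_j<\lambda$ and use the ring identity $(\deg_{\cV_i^-})^2=(G)$ in $A(G)$ to simplify the product: every basic degree appearing with even multiplicity drops out, leaving a product of pairwise distinct basic degrees
\[
\omega^H+(G) \;\equiv\; \prod_{(i,k)\in I(\lambda)}\deg_{\cV^-_{i,k}},
\]
for an explicit index set $I(\lambda)$. Then Corollary \ref{cor:prod} identifies the orbit types $(H)\in\Psi(\omega^H)$ with nonzero coefficients in $\omega^H-(G)$: these are precisely the maximal elements of $\bigcup_{(i,k)\in I(\lambda)}\Phi(\deg_{\cV^-_{i,k}})\setminus\{(G)\}$ that appear in only one of the factors. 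Feeding the explicit expressions for $\deg_{\cV^-_j}$ listed just before the theorem and tracking which $\Phi$-maximal orbit types $(S_4^-),(D_4^d),(D_4^{\hat d}),(D_2^d)$ survive cancellation in each interval produces the exact lists claimed in (a)--(f). Finally, Theorem \ref{thm:exist2} converts each surviving $(H_j)$ into a $2\pi$-periodic solution $x$ of \eqref{eq:system2} with $G_x\geq H_j$, which through $u(t)=x(t/\lambda)$ gives the desired $p$-periodic solution of \eqref{eq:system1}; oddness in $t$ follows from $H_j\geq D_1^z$.

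The main obstacle is the bookkeeping: determining, for each of the six ranges of $\lambda$, which $(j,k)$ indices (with $k\in 2\bn-1$ as permitted by the $D_1^z$-reduction) lie below $\lambda$, applying the involution $\deg^2=(G)$ to cut down the product, and then--most delicately--checking the hypotheses of Lemma \ref{lem:max} so that a given maximal orbit type $(H)$ indeed survives in $\omega^H-(G)$ rather than cancelling against a contribution from a different factor. This requires comparing the supports $\Phi(\deg_{\cV_{i,k}^-})$ across $i\in\{0,1,2,3,4\}$ and using that the basic degrees $\deg_{\cV_{i,k}^-}$ are pairwise distinct (except when two irreducibles happen to produce the same basic degree, in which case the corresponding factors pair up and disappear). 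Once that combinatorial reduction is performed interval-by-interval, the statements (a)--(f) follow directly.
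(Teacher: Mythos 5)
Your proposal does not address the statement it is supposed to prove. The statement in question is Theorem \ref{thm:GpropDeg} from the Appendix: the existence and uniqueness of the map $G\mbox{\rm -}\deg:\mathcal{M}^{G}\to A(G)$ satisfying the axioms (G1)--(G7). What you have written instead is a proof sketch for Theorem \ref{th:octahedron} (the truncated octahedron network): you invoke Theorem \ref{thm:exist2} with $H=D_1^z$, compute $\omega^H$ as a product of basic $S_4\times\bbZ_2$-degrees, and apply Corollary \ref{cor:prod} to extract orbit types in each $\lambda$-interval. None of that bears on constructing an equivariant degree or verifying that the seven axioms determine it uniquely; indeed your argument \emph{presupposes} Theorem \ref{thm:GpropDeg}, since the existence property (G1), the multiplicativity property (G5) underlying the product formula \eqref{eq:prod-prop}, and the Burnside-ring identity $(\deg_{\mathcal{V}_i})^2=(G)$ are all consequences of the theorem you were asked to prove. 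The argument is therefore circular relative to the assigned statement, quite apart from being aimed at the wrong target.

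A genuine proof of Theorem \ref{thm:GpropDeg} would have to (i) construct the degree, e.g.\ by passing to a generic $\Omega$-admissible approximation of $f$ whose zero set decomposes into finitely many orbits of isolated orbit types and counting these with signs, or equivalently via the recurrence formula \eqref{eq:RF-0} applied to the Brouwer degrees $\deg(f^{H},\Omega^{H})$ over the poset $\Phi_0(G)$; (ii) verify that the resulting element of $A(G)$ is well defined and satisfies (G1)--(G7); and (iii) establish uniqueness, which follows because (G7) expresses every coefficient $n_H$ in terms of ordinary Brouwer degrees on fixed-point spaces and the coefficients $n_K$ for $(K)>(H)$, so any two maps satisfying the axioms agree by downward induction on the partial order. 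The paper itself offers no proof of this theorem --- it is quoted from the monograph \cite{AED} --- so the expected answer here is either that full construction or an explicit citation, not a computation in a specific example.
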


\vskip.3cm

The $G$-equivariant degree can be generalized, in a standard way (see
\cite{AED}) to a $G$-equivariant Leray-Schauder degree in infinite-dimensional
Banach spaces. \vskip.3cm

\subsection{Computation of $\Gamma$-Equivariant Degree}

Put $B(V):=\left\{  x\in V:\left|  x\right|  <1\right\}  $. For each
irreducible $G$-representation $\mathcal{V} _{i}$, $i=0,1,2,\dots$, we define
\begin{align*}
\deg_{\mathcal{V}_{i}}:=G\mbox{\rm -}\deg(-\id,B(\mathcal{V} _{i})),
\end{align*}
and will call $\deg_{\mathcal{V} _{i}}$ the \emph{basic degree}.

Consider a $G$-equivariant linear isomorphism $T:V\to V$ and assume that $V$
has a $G$-isotypical decomposition \eqref{eq:Giso}. Then, by the
Multiplicativity property (G5),
\begin{equation}
\label{eq:prod-prop}G\mbox{\rm -}\deg(T,B(V))=\prod_{i=0}^{r}G\mbox{\rm -}\deg
(T_{i},B(V_{i}))= \prod_{i=0}^{r}\prod_{\mu\in\sigma_{-}(T)} \left(
\deg_{\mathcal{V} _{i}}\right)  ^{m_{i}(\mu)}
\end{equation}
where $T_{i}=T|_{V_{i}}$ and $\sigma_{-}(T)$ denotes the real negative
spectrum of $T$, i.e., $\sigma_{-}(T)=\left\{  \mu\in\sigma(T):\mu<0\right\}
$. \vskip.3cm

Notice that the basic degrees can be effectively computed. Indeed
\begin{align*}
\deg_{\mathcal{V} _{i}}=\sum_{(H)\in\Phi_{0}(G)}n_{H}(H),
\end{align*}
where the coefficients $n_{H}$ can be computed from the following recurrence
formula
\begin{equation}
\label{eq:bdeg-nL}n_{H}=\frac{(-1)^{\dim\mathcal{V} _{i}^{H}}- \sum
_{H<K}{n_{K}\, n(H,K)\, \left|  W(K)\right|  }}{\left|  W(H)\right|  }.
\end{equation}
\vskip.3cm

\textbf{Acknowledgment: }The first author C. Garc\'{\i}a-Azpeitia is supported by PAPIIT-UNAM grant IA105217. The second author W.Krawcewicz acknowledges the support from
National Science Foundation through grant DMS-1413223 and the support from Guangzhou University. The third author Y.Lv is supported by National Natural Science Foundation of China through grant 11701325.


\begin{thebibliography}{99}     

\bibitem{AmZe} \textrm{H.Amann and E.Zehnder.}  Nontrivial solutions for a
class of nonresonance problems and applications to nonlinear differential
equations. Ann. Scuola Normale Superiore di Pisa. 7:539--603,1980. 

\bibitem {aoki}\textrm{K.Aoki.} Stable and unstable periodic orbits in the
one-dimensional lattice theory. Phys. Rev.E  94~042209,2016.

\bibitem {survey}\textrm{Z.Balanov, W.Krawcewicz, S.Rybicki and
H.Steinlein.} A short treatise on the equivariant degree theory and its
applications, J.Fixed Point Theory App. 8:1--74, 2010.
    
\bibitem {AED} \textrm{Z.Balanov, W.Krawcewicz and H.Steinlein.}  Applied
Equivariant Degree. AIMS Series on Differential Equations \& Dynamical
Systems, Vol.1, 2006.

\bibitem {BKLN}\textrm{Z.Balanov, W.Krawcewicz, Z.Li and M.Nguyen.} Multiple
solutions to implicit symmetric boundary value problems for second order
ordinary differential equations (ODEs): Equivariant Degree Approach, Symmetry,
4:287-312,2013.

\bibitem {BLN}\textrm{Z.Balanov, W.Krawcewicz and M.Nguyen.} Multiple solutions to
symmetric boundary value problems for second order ODEs: equivariant degree
approach, Nonlinear Analysis; TMA, 94:45-64,2014.

\bibitem {caputo} \textrm{J.G.Caputo, I.Khames, A.Knippel and P.Panayotaros.} Periodic orbits in nonlinear wave equations~on networks.
J. Phys. A: Math. Theor. 50 (2017) 375101

                                                                                     
\bibitem{Ch} \textrm{Kung Ching Chang.} Solutions of asymptotically linear operator
equations via morse theory. Communications on Pure and Applied
Mathematics, 34(5):693-712.

\bibitem {DKLP}\textrm{M.Dabkowski, W.Krawcewicz, Y.Lv and H-P.Wu.} Multiple Periodic
Solutions for $\Gamma$-Symmetric Newtonian Systems. Journal of Differential Equations. 263 (10): 6684-6730, 2017.

\bibitem {FRRuan}\textrm{J.Fura, A.Golebiewska and H.Ruan.} Existence of
nonstationary periodic solutions for $\Gamma$-symmetric asymptotically linear
autonomous Newtonian systems with degeneracy, Rocky Moun.J.Mat., 40(3):873-911,2010.

\bibitem {FRR}\textrm{J. Fura, A. Ratajczak and S. Rybicki.} Existence and
continuation of periodic solutions of autonomous Newtonian systems.
Journal of Differential Equations. 218:216-252,2005.

\bibitem {Ga15a}\textrm{C. Garc\'{\i}a-Azpeitia.} Traveling and standing waves in
coupled pendula and Newton's cradle. Journal of Nonlinear Science, 26:1767-1787,2016.


\bibitem {GI2}\textrm{C.Garc\'{\i}a-Azpeitia and J.Ize.} Global bifurcation of polygonal
relative equilibria for masses, vortices and dNLS oscillators. Journal of Differential Equations, 2011,

\bibitem {GI5}\textrm{C.Garc\'{\i}a-Azpeitia and J.Ize.} Bifurcation of periodic solutions
from a ring configuration of discrete nonlinear oscillator, Discrete and
Continuous Dynamical Systems, 2011.

\bibitem {G}\textrm{K.G\c eba.} Degree for gradient equivariant maps and
equivariant Conley index. In: Topological Nonlinear Analysis, II (Frascati,
1995), Progr.~Nonlinear Differential Equations Appl. 27, Birkh\"auser, Boston,
1997, 247--272.

\bibitem {GR}\textrm{K.Geba and S.Rybicki.}Some remarks on the Euler ring $U(G)$.
J.Fixed Point Theory Appl. 3(1):143--158,2008.

\bibitem{Go} \textrm{A. Go\l \c ebiewska.} Periodic Solutions of Asymptotically Linear Autonomous Hamiltonian Systems with Resonance. Journal of Dynamics
and Differential Equations. 1-16,2017.


\bibitem {GolRyb}\textrm{A.Go\l \c ebiewska and S.Rybicki.} Global bifurcations of
critical orbits of $G$-invariant strongly indefinite functionals, Nonl. Anal
TMA, 74,1823-1834,2011.

\bibitem {GolSchSt}\textrm{M. Golubitsky, I.N. Stewart and D.G. Schaeffer.} Singularities and Groups in Bifurcation Theory, Vol. II, Applied
Mathematical Sciences 69, Springer-Verlag, New York-Berlin, 1988.


\bibitem {Radzki}\textrm{W.Radzki.} Degenerate Branching Points of Autonomous
Hamiltonian Systems, Nonlinear Anal. Theory Methods Appl. 55 (1--2) , 2003.


\bibitem {RaRy}\textrm{W.Radzki and S.Rybicki.} Degenerate bifurcation points of
periodic solutions of autonomous Hamiltonian systems. Journal ofDifferential
Equations ,2004.

\bibitem {RR}\textrm{H.Ruan and S.Rybicki.} Applications of equivariant
degree for gradient maps to symmetric Newtonian systems. Nonlinear
Anal.1479-1516, 2008.

\bibitem {RY1}\textrm{S.Rybicki.} A degree for $SO(2)$-equivariant
orthogonal maps and its applications to bifurcation theory. Nonlinear
Anal.23: 83--102,1994.

\bibitem {RY2}\textrm{S.Rybicki.} Applications of degree for
$SO(2)$-equivariant gradient maps to variational nonlinear problems with
$SO(2)$-symmetries. Topol. Methods Nonlinear Anal. 9:383--417,1997.

\bibitem {RY4}\textrm{S.Rybicki.} Degree for $SO(2)$-equivariant
strongly indefinite functionals. Nonlinear Anal. 43:1001--1017,2001.

\bibitem {RY3}\textrm{S.Rybicki.} Bifurcations of solutions of
$SO(2)$-symmetric nonlinear problems with variational structure. In: Handbook
of Topological Fixed Point Theory, Springer,339--372, 2005.

\bibitem {RybSurvey}\textrm{S. Rybicki.}  Degree for equivariant
gradient maps. Milan J.~Math. 73: 103--144,2005. 

\bibitem {Pin}H-P.~Wu, GAP program for the computations of the Burnside ring
$A(\Gamma\times O(2))$, \texttt{https://bitbucket.org/psistwu/gammao2},
developed at University of Texas at Dallas, 2016.



\end{thebibliography}
\end{document}